\def\imod#1{\allowbreak\mkern10mu({\operator@font mod}\,\,#1)}
\newtheorem{theorem}{Theorem}[section]
\newtheorem{proposition}{Proposition}[section]
\newtheorem{lemma}{Lemma}[section]
\newtheorem{corollary}{Corollary}[section]
\newtheorem{fact}{Fact}[section]
\newtheorem{conjecture}{Conjecture}[section]
\theoremstyle{definition}
\newtheorem{definition}{Definition}[section]
\newtheorem{example}{Example}[section]
\begin{document}

\title{Flexible Toggles and Symmetric Invertible Asynchronous Elementary Cellular  Automata}
\author{Colin Defant}
\address{Old Address: University of Florida \\ 1400 Stadium Rd. \\ Gainesville, FL 32611 United States}
\address{Current Address: Princeton University \\ Fine Hall, 304 Washington Rd. \\ Princeton, NJ 08544}
\email{cdefant@princeton.edu}
\thanks{
This work was supported by National Science Foundation grant DMS--1358884.}

\begin{abstract}  
A sequential dynamical system (SDS) consists of a graph $G$ with vertices $v_1,\ldots,v_n$, a state set $A$, a
collection of ``vertex functions" $\{f_{v_i}\}_{i=1}^n$, and a permutation $\pi\in S_n$ that specifies how to compose these
functions to yield the SDS map $[G,\{f_{v_i}\}_{i=1}^n,\pi]\colon A^n\to A^n$. In this paper, we study symmetric invertible SDS defined over the cycle graph $C_n$ using the set of states $\mathbb F_2$. These are, in other words, asynchronous elementary cellular automata (ECA) defined using ECA rules 150 and 105. Each of these SDS defines a group action on the set $\mathbb F_2^n$ of $n$-bit binary vectors. Because the SDS maps are products of involutions, this relates to \emph{generalized toggle groups}, which Striker recently introduced. In this paper, we further generalize the notion of a generalized toggle group to that of a \emph{flexible toggle group}; the SDS maps we consider are examples of Coxeter elements of flexible toggle groups. 

Our main result is the complete classification of the dynamics of symmetric invertible SDS defined over cycle graphs using the set of states $\mathbb F_2$ and the identity update order $\pi=123\cdots n$.  More precisely, if $T$ denotes the SDS map of such an SDS, then we obtain an explicit formula for $\vert\text{Per}_r(T)\vert$, the number of periodic points of $T$ of period $r$, for every positive integer $r$. It turns out that if we fix $r$ and vary $n$ and $T$, then $\vert\text{Per}_r(T)\vert$ only takes at most three nonzero values. 
\end{abstract} 

\maketitle 

\bigskip

\noindent 2010 {\it Mathematics Subject Classification}: Primary 37E15; Secondary 05A15; 05E18.    

\noindent \emph{Keywords: Sequential dynamical system; parity function; asynchronous cellular automaton; generalized toggle group; flexible toggle group; orbit structure.} 

\section{Introduction}
Suppose we wish to model a finite system in which objects have various states and can update their states in discrete time steps. Moreover, assume that the state to which an object updates depends only on the current state of that object along with the states of other nearby or connected objects. This rough idea is the basis for a graph dynamical system. A graph dynamical system contains a base graph used to represent the connectivity of objects, a set of states, and a collection of vertex functions that determine how a vertex updates its state depending on its current state and the current states of its neighbors. In addition, a graph dynamical system contains some rule determining the scheme by which vertices update their states.  In a series of papers published between 1999 and 2001, Barrett, Mortveit, and Reidys introduced the notion of a sequential dynamical system (SDS), a graph dynamical system in which vertices update their states sequentially \cite{Barrett1,Barrett2,Barrett3,Mortveit2}. Subsequently, several researchers have worked to develop a general theory of SDS (see, for example, \cite{Barrett4,Barrett5,Barrett6,Collina,Garcia1,Macauley,
Macauley2,Reidys1}). The recent article \cite{Collina} is particularly interesting because it shows how SDS, originally proposed as models of computer simulation, are now being studied in relation with Hecke-Kiselman monoids in algebraic combinatorics. We draw most of our terminology and background information concerning SDS from \cite{Mortveit}, a valuable reference for anyone interested in exploring this field.

An SDS is built from the following parts:
\begin{itemize}
\item An undirected simple graph $G$ with vertices $v_1,\ldots,v_n$.
\item A set of states $A$. We typically let $A=\mathbb F_2$ (the field with $2$ elements). Let $q(v_i)$ denote the state of vertex $v_i$. 
\item A collection of \emph{vertex functions} $\{f_{v_i}\}_{i=1}^n$. Each vertex $v_i$ of $G$ is endowed with its own vertex function $f_{v_i}\colon A^{\deg(v_i)+1}\rightarrow A$. 
\item A permutation $\pi\in S_n$. The permutation $\pi$ is known as the \emph{update order} of the SDS. 
\end{itemize}

Our attention will be directed primarily toward SDS defined over the cycle graph $C_n$. We label the vertices of $C_n$ as $v_1,\ldots,v_n$, where $v_i$ is adjacent to $v_j$ if and only if $i-j\equiv\pm 1\pmod n$. 

The following notation and definitions will prove useful. We always assume $n\geq 3$ is an integer. Let $S_\Omega$ denote the symmetric group on a set $\Omega$, and let $S_m=S_{\{1,\ldots,m\}}$. If $p$ is a polynomial or a vertex of a graph, let $\deg(p)$ denote the degree of $p$. A function $g\colon\mathbb F_2^m\rightarrow\mathbb F_2$ is called \emph{symmetric} if $g(x_1,\ldots,x_m)=g(x_{\sigma_1},\ldots,x_{\sigma_m})$ for any $(x_1,\ldots,x_m)\in\mathbb F_2^m$ and any permutation $\sigma_1\cdots\sigma_m\in S_m$ (in this article, we write permutations as words). If $X$ is a set and $f\colon X\to X$ is a function, we say an element $x$ of $X$ is a periodic point of $f$ of period $r$ if $f^r(x)=x$ and $f^s(x)\neq x$ for all positive integers $s<r$. Define $\text{Per}_r(f)$ to be the set of periodic points of $f$ of period $r$. If $X$ is finite, then $\vert\text{Per}_1(f^r)\vert=\sum_{d\mid r}\vert\text{Per}_d(f)\vert$. By M\"obius inversion, 
\begin{equation}\label{Eq3}
\vert\text{Per}_r(f)\vert=\sum_{d\mid r}\mu(r/d)\vert\text{Per}_1(f^d)\vert,
\end{equation} where $\mu$ is the number-theoretic M\"obius function. 

Consider a general SDS as defined above. Suppose a vertex $v_i$ is adjacent to vertices $v_{j_1},\ldots,v_{j_d}$, where $d=\deg(v_i)$. When it is time for $v_i$ to update, the vertex function $f_{v_i}$ takes as its inputs the states $q(v_i),q(v_{j_1}),\ldots,q(v_{j_d})$ and outputs the new state to which $v_i$ should update. How should the states $q(v_i),q(v_{j_1}),\ldots,$ $q(v_{j_d})$ be ordered when used as arguments of the function $f_{v_i}$? In general, the answer to this question can vary depending on the problem one wishes to solve or the behavior one wishes to model. Here, we need not to worry about this issue because we will only deal with symmetric vertex functions.

The vector $(q(v_1),\ldots,q(v_n))$, which lists all of the states of the vertices of $G$ in the order corresponding to the vertex indices, is known as the \emph{system state} of the SDS. For each vertex $v_i$ of $G$, we define the local update function $F_{v_i}\colon\mathbb F_2^n\rightarrow\mathbb F_2^{n}$ by \[F_{v_i}(x_1,\ldots,x_n)=(x_1,\ldots,x_{i-1},f_{v_i}(x_i,x_{j_1},\ldots,x_{j_d}),x_{i+1},\ldots,x_n),\] where $v_{j_1},\ldots,v_{j_d}$ are the neighbors of $v_i$. The local update function $F_{v_i}$ represents the change in the system state that occurs when the vertex $v_i$ updates its state using the vertex function $f_{v_i}$. From the local update functions and the update order $\pi=\pi_1\cdots\pi_n$, we define the SDS map $F\colon\mathbb F_2^n\rightarrow\mathbb F_2^n$ by \[F=F_{v_{\pi_n}}\circ\cdots\circ F_{v_{\pi_1}}.\] Applying the map $F$ is known as a \emph{system update}; it represents the change in the system state that occurs when each vertex $v_i$ updates in the sequential order specified by the permutation $\pi$.  

To emphasize that an SDS is defined over the graph $G$ using the collection of vertex functions $\{f_{v_i}\}_{i=1}^n$ and the update order $\pi$, it is common to denote the SDS map by $[G,\{f_{v_i}\}_{i=1}^n,\pi]$. If all of the vertex functions $f_{v_i}$ are equal to the same function $f$, we simply write $[G,f,\pi]$ for the SDS map. 
\begin{example} \label{Example1}
Consider the graph $C_4$ shown in block A of Figure \ref{Fig1}. 
\begin{figure}
\begin{center} 
\includegraphics[width=\textwidth]{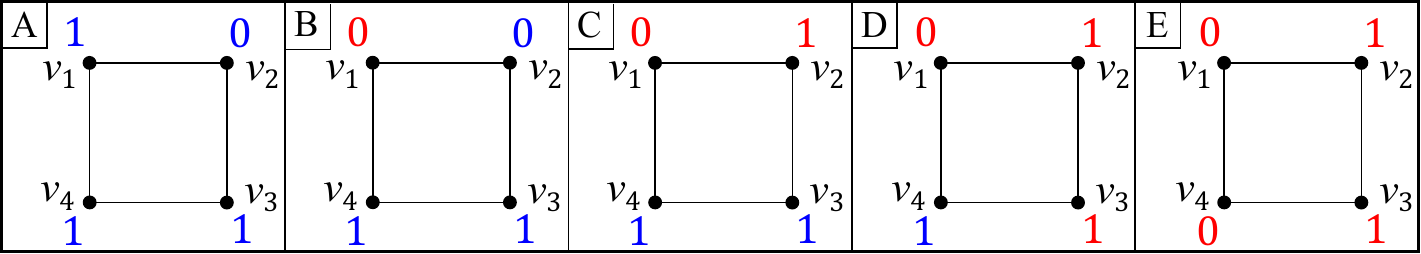}
\end{center}
\caption{\footnotesize{A system update of the SDS of Example \ref{Example1}. Block A shows the inital state of the SDS. Blocks B, C, and D show the intermediate steps of the system update. Block E shows the system state obtained after completing the system update.}} \label{Fig1} 
\end{figure} 
Define an SDS over $C_4$ using the update order $\pi=1234$, the set of states $\mathbb F_2$, and the vertex function $f$ given by $f(x,y,z)=x+y+z$. The initial system state of this SDS is $(1,0,1,1)$, as shown by the blue labels in block A of Figure \ref{Fig1}. We will perform a system update in order to find $[C_4,f,\pi](1,0,1,1)$. Because $\pi_1=1$, we first apply the local update function $F_{v_1}$. We have \[F_{v_1}(x_1,x_2,x_3,x_4)=(f(x_4,x_1,x_2),x_2,x_3,x_4),\] so \[F_{v_1}(1,0,1,1)=(f(1,1,0),0,1,1)=(0,0,1,1).\] Similarly, one can calculate that $F_{v_2}(0,0,1,1)=(0,1,1,1)$, $F_{v_3}(0,1,1,1)=(0,1,1,1)$, and \\$F_{v_4}(0,1,1,1)=(0,1,1,0)$. Letting $F=[C_4,f,\pi]$, we find that $F(1,0,1,1)=(0,1,1,0)$, as shown in block E of the figure. In other words, through a sequence of local updates, the system update transforms the system state $(1,0,1,1)$ into the new system state $(0,1,1,0)$. \hfill $\blacklozenge$
\end{example} 
The SDS map $[G,\{f_{v_i}\}_{i=1}^n,\pi]$ tells us how the states of the vertices of the graph $G$ change when we update the graph in a sequential manner. A natural aim is to analyze the long-term behavior of the states of the vertices as we continually update the system. To do this, we make use of a directed graph known as the \emph{phase space} of the SDS map. The phase space of a function $F\colon X\to X$, denoted $\Gamma(F)$, is a directed graph with vertex set $V(\Gamma(F))=X$ and edge set \[E(\Gamma(F))=\{(x,y)\in X\times X\colon y=F(x)\}.\] In other words, a directed edge is drawn from $x$ to $F(x)$ for each $x\in X$. The phase space of the SDS map given in Example \ref{Example1} is shown in Figure \ref{Fig2}. 
\begin{figure}[ht]
\begin{center} 
\includegraphics[height=47mm]{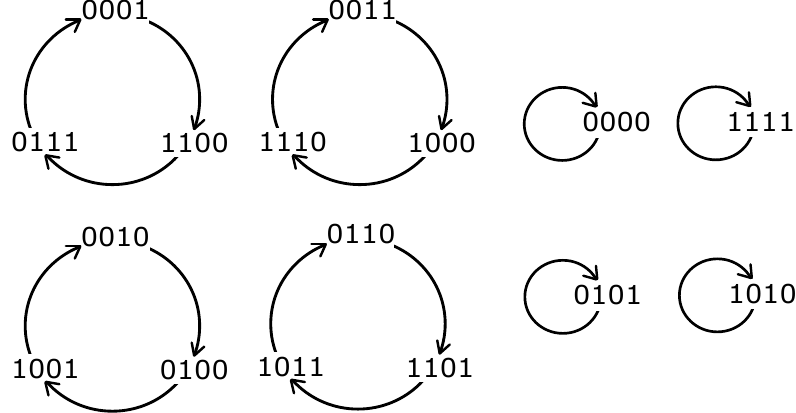}
\end{center}
\caption{\footnotesize{The phase space of the SDS map from Example \ref{Example1}. Note that we have written vectors as binary strings in order to improve the aesthetics of the image.}} 
\label{Fig2} 
\end{figure} 

In Example \ref{Example1}, we used the update order $\pi=1234$. Henceforth, we let $\text{id}$ denote the identity permutation $123\cdots n$; this will be the primary update order we will consider. Our example also used the vertex function $f\colon\mathbb F_2^3\rightarrow\mathbb F_2$ given by $f(x,y,z)=x+y+z$. Following \cite{Mortveit}, we denote this specific function by $\text{parity}_3$. Similarly, we define the function $(1+\text{parity})_3\colon\mathbb F_2^3\rightarrow\mathbb F_2$ by \[(1+\text{parity})_3(x,y,z)=1+\text{parity}_3(x,y,z)=1+x+y+z.\] More generally, for a positive integer $k$, the function $\text{parity}_k\colon\mathbb F_2^k\to\mathbb F_2$ is defined as the sum of the bits modulo $2$, and $(1+\text{parity})_k\colon\mathbb F_2^k\to\mathbb F_2$ is its negation.

We say an SDS is \emph{symmetric} if all of its vertex functions are symmetric (as mentioned before, this is the only type of SDS we will consider). Call an SDS \emph{invertible} if its SDS map is a bijection. An SDS defined over a cycle graph using the set of states $\mathbb F_2$ is also known as an \emph{asynchronous elementary cellular automaton}. The goal of this paper is to fully describe the dynamics of symmetric invertible asynchronous elementary cellular automata (SIAECA) with the identity update order. The following proposition, which provides one reason why the maps $\text{parity}_k$ and $(1+\text{parity})_k$ merit special attention in the theory of SDS, will help us accomplish our goal. 

\begin{proposition}[{\cite[Prop. 4.16]{Mortveit}}]\label{Thm1}
Suppose $\{f_{v_i}\}_{i=1}^n$ is a collection of symmetric vertex functions. The SDS map $[G,\{f_{v_i}\}_{i=1}^n,\pi]\colon\mathbb F_2^n\rightarrow\mathbb F_2^n$ is bijective if and only if \[f_{v_i}\in\{\text{\emph{parity}}_{\deg(v_i)+1},\text{\emph{(1+parity)}}_{\deg(v_i)+1}\}\] for each $i\in\{1,\ldots,n\}$.  
\end{proposition}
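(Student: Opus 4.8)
The plan is to prove both implications by reducing the global question of whether the composite map $F=[G,\{f_{v_i}\}_{i=1}^n,\pi]$ is bijective to a purely local question about each vertex function. The crucial structural observation is that every local update function $F_{v_i}$ alters only the $i$th coordinate of the system state, and the new value of that coordinate depends solely on the old value $x_i$ together with the states of the neighbors of $v_i$. Consequently $F_{v_i}$ permutes each fiber obtained by fixing all coordinates other than the $i$th, and it is a bijection of $\mathbb F_2^n$ if and only if, for every choice $\vec y$ of neighbor states, the single-variable map $x_i\mapsto f_{v_i}(x_i,\vec y)$ is a bijection of $\mathbb F_2$. Because $\mathbb F_2$ has only two elements, this amounts to the condition $f_{v_i}(0,\vec y)\neq f_{v_i}(1,\vec y)$ for all $\vec y\in\mathbb F_2^{\deg(v_i)}$.

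For the ``if'' direction I would verify directly that $\text{parity}_{\deg(v_i)+1}$ and $(1+\text{parity})_{\deg(v_i)+1}$ both satisfy the displayed inequality, since each is an affine function of $x_i$ whose $x_i$-coefficient is $1$; hence every $F_{v_i}$ is a bijection. As $F$ is a composition of the bijections $F_{v_{\pi_1}},\ldots,F_{v_{\pi_n}}$, it is itself a bijection.

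For the ``only if'' direction, the first step is the finite-set lemma that a composition $g_m\circ\cdots\circ g_1$ of self-maps of a finite set $X$ is a bijection if and only if each $g_j$ is a bijection: since $\vert\mathrm{im}(g_m\circ\cdots\circ g_1)\vert\le\vert\mathrm{im}(g_1)\vert\le\vert X\vert$, bijectivity of the composite forces $g_1$ to be surjective and hence bijective, and one then induces on $g_m\circ\cdots\circ g_2=(g_m\circ\cdots\circ g_1)\circ g_1^{-1}$. Applying this to $F$ shows that each $F_{v_i}$ is a bijection, so the local condition $f_{v_i}(0,\vec y)\neq f_{v_i}(1,\vec y)$ holds for every $\vec y$. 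Now I would invoke symmetry: a symmetric function $f_{v_i}\colon\mathbb F_2^{\deg(v_i)+1}\to\mathbb F_2$ depends only on the number of $1$'s among its arguments, so there is a function $h\colon\{0,\ldots,\deg(v_i)+1\}\to\mathbb F_2$ with $f_{v_i}(x)=h(w(x))$, where $w(x)$ denotes that number. A vector $(0,\vec y)$ with $\vec y$ of weight $w$ becomes $(1,\vec y)$ of weight $w+1$, so the local condition translates to $h(w)\neq h(w+1)$ for all $w\in\{0,\ldots,\deg(v_i)\}$. Over $\mathbb F_2$ this alternation condition forces $h(w)=h(0)+w$, and the two choices $h(0)=0$ and $h(0)=1$ yield exactly $f_{v_i}=\text{parity}_{\deg(v_i)+1}$ and $f_{v_i}=(1+\text{parity})_{\deg(v_i)+1}$, respectively.

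The main obstacle — really the only nonroutine point — is the reduction itself: justifying that bijectivity of the global map $F$ is equivalent to bijectivity of each local update $F_{v_i}$, which rests on the finiteness of $\mathbb F_2^n$ and on the fact that each $F_{v_i}$ touches only one coordinate. Once this reduction is in place, the symmetry hypothesis collapses the problem to the one-dimensional alternation condition $h(w)\neq h(w+1)$, and the remaining computations are immediate.
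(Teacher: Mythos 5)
Your proof is correct; note, though, that the paper does not prove this proposition at all --- it imports it verbatim from \cite[Prop.~4.16]{Mortveit}, so there is no in-paper argument to compare against. Your argument (reduce global bijectivity to bijectivity of each $F_{v_i}$ via the finite-set composition lemma, then use symmetry to collapse $f_{v_i}$ to a weight function $h$ satisfying the alternation condition $h(w)\neq h(w+1)$) is the standard proof of this fact and is complete as written.
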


There are a total of $256$ functions from $\mathbb F_2^3$ to $\mathbb F_2$. These are the possible vertex functions that can be used in an elementary cellular automaton, so they are called the elementary cellular automaton (ECA) rules \cite{Mortveit}. These rules are typically numbered with the integers from $0$ to $255$; the functions $\text{parity}_3$ and $(1+\text{parity})_3$ are ECA rules $150$ and $105$, respectively. 

Two functions $f\colon X\to X$ and $g\colon Y\to Y$ are said to be \emph{dynamically equivalent} (or \emph{conjugate}) if there is a bijection $h\colon X\to Y$ such that $h\circ f=g\circ h$. If $f$ and $g$ are dynamically equivalent, then their phase spaces are isomorphic as directed graphs. If $X$ and $Y$ are finite, then the maps $f$ and $g$ are said to be \emph{cyclically equivalent} if $\vert\text{Per}_r(f)\vert=\vert\text{Per}_r(g)\vert$ for every positive integer $r$. The notions of dynamical equivalence and cyclical equivalence coincide if $X$ and $Y$ are finite and $f$ and $g$ are bijections. 

Let us fix a base graph $C_n$ and a collection $\{f_{v_i}\}_{i=1}^n$ of vertex functions that are all either $\text{parity}_3$ or $(1+\text{parity})_3$. In Section 4, we consider only the identity update order $\text{id}$. It may seem as though this restricts our results to only one of $n!$ different update orders. However, some theorems of Macauley and Mortveit \cite{Macauley2} show that varying the update order can give rise to at most $\left\lfloor n/2\right\rfloor$ dynamically nonequivalent SDS maps. Indeed, these authors show that $\delta(C_n)$, the number of connected components of a certain graph $D(C_n)$, is an upper bound for the number of dynamically nonequivalent SDS maps obtainable by varying the update order. Combining Propositions 5.1 and 5.9 from \cite{Macauley2} shows that $\delta(C_n)=\left\lfloor n/2\right\rfloor$. 

The remainder of the paper is organized as follows. In Section 2, we discuss the notions of ``toggles" and ``generalized toggles," which have attracted recent attention among combinatorialists. We further generalize the notion of a generalized toggle to that of a ``flexible toggle" and show that symmetric invertible SDS provide examples of flexible toggles. In Section 3, we recall some facts from linear algebra and discuss how they can be used to analyze the dynamics of affine self-maps of $\mathbb F_2$-vector spaces. In Section 4, we give a formula for $\vert\text{Per}_r(T)\vert$ for any positive integer $r$ when $T$ is the SDS map of an SIAECA with the identity update order. To prove our theorems, we borrow many techniques from Elspas \cite{Elspas} and Milligan and Wilson \cite{Milligan} (which we discuss in Section 3). In \cite{Elspas} and \cite{Milligan}, the authors determine the dynamics of affine maps from the elementary divisors of certain linear maps. Our methods are slightly different, and they are notable because they illustrate that it is not actually necessary to know all of the elementary divisors of these linear maps. With the help of M\"obius inversion, we need only know the invariant factors of these linear maps. In other words, it is not necessary to factor the invariant factors into irreducibles.    

\section{Toggles}
Suppose $E$ is a set and $\mathscr L$ is a fixed collection of subsets of $E$. For each $e\in E$, define the \emph{generalized toggle} $t_e\colon\mathscr L\to\mathscr L$ by \[t_e(X)=\begin{cases} X\cup\{e\}, & \mbox{if } e\not\in X\text{ and }X\cup\{e\}\in\mathscr L; \\ X\setminus\{e\}, & \mbox{if } e\in X\text{ and }X\setminus\{e\}\in\mathscr L; \\ X, & \mbox{otherwise}. \end{cases}\] The \emph{generalized toggle group} of $E$ with respect to $\mathscr L$ is the subgroup of the symmetric group $S_\mathscr{L}$ generated by $\{t_e\colon e\in E\}$. In the case that $E$ is a partially ordered set and $\mathscr L$ is the collection of order ideals of $E$, the functions $t_e$ are simply called \emph{toggles}. In this case, the generalized toggle group of $E$ with respect to $\mathscr L$ is known as the \emph{toggle group} of $E$. 

Toggles and toggle groups were originally studied by Cameron and Fon-der-Flaass in 1995 \cite{Cameron}, though they were not given these names until the article of Striker and Williams in 2012 \cite{Striker2}. Since the work of Cameron and Fon-der-Flaass, researchers in the growing field of dynamic algebraic combinatorics have studied interesting toggle groups and related concepts \cite{Chan, Many, Einstein, Einstein2, Grinberg, Grinberg2}. So-called \emph{Coxeter elements} have received special attention. A Coxeter element of the toggle group of an $m$-element poset $E$ is an element of the form $t_{e_1}t_{e_2}\cdots t_{e_m}$, where $e_1e_2\cdots e_m\in S_E$ is a permutation of the elements of $E$. If $e_1e_2\cdots e_m$ is a linear extension of $E$, then the Coxeter element $t_{e_1}t_{e_2}\cdots t_{e_m}$ is a special operation known as \emph{rowmotion}. Cameron and Fon-der-Flaass defined rowmotion (as with toggles, the name ``rowmotion" had to wait for Striker and Williams \cite{Striker2}) and showed that it does not depend on the choice of the linear extension $e_1e_2\cdots e_m$. Another important operation known as \emph{promotion} is defined by a different Coxeter element of the toggle group of a poset \cite{Stanley}. 

Recently, Striker \cite{Striker} defined generalized toggles and generalized toggle groups, proving structure theorems for certain generalized toggle groups and collecting several examples of actions that are elements of these groups. Roby's delightful survey article \cite{Roby} discusses connections between generalized toggle groups and an interesting property of certain group actions called \emph{homomesy}. 

Our motivation for mentioning these notions stems from an observation that the SDS maps we consider in this article resemble elements of generalized toggle groups; they are not quite the same, however. We will need the following more flexible notion of toggling. In the following definition, let $A\Delta B$ denote the symmetric difference of the sets $A$ and $B$. 

\begin{definition}\label{Def4}
Suppose $E$ is a set and $\mathscr L$ is a fixed collection of subsets of $E$. If $e\in E$, then a \emph{flexible toggle of} $E$ \emph{at} $e$ is a function $\mathscr T_e\colon \mathscr L\to \mathscr L$ satisfying 
\begin{enumerate}[(i)]
\item $\mathscr T_e^2(X)=X$
\item $X\Delta \mathscr T_e(X)\subseteq\{e\}$
\end{enumerate} for all $X\in \mathscr L$. If, for each $e\in E$, we fix $\mathscr T_e$ to be a specific flexible toggle of $E$ at $e$, then the subgroup of the symmetric group $S_{\mathscr L}$ generated by the set $\{\mathscr T_e\colon e\in E\}$ is a \emph{flexible toggle group} of $E$. If $\vert E\vert=m<\infty$, then a \emph{Coxeter element} of this flexible toggle group is an element of the form $\mathscr T_{e_1}\mathscr T_{e_2}\cdots\mathscr T_{e_m}$, where $e_1e_2\cdots e_m\in S_E$.   
\end{definition} 

Consider a symmetric invertible SDS defined over a graph $G$ with vertex set $V=\{v_1,\ldots,v_n\}$. Suppose the set of states of the SDS is $\mathbb F_2$, and let $\{f_{v_i}\}_{i=1}^n$ be the collection of vertex functions. According to Proposition \ref{Thm1}, each vertex function $f_{v_i}$ is either $\text{parity}_{\deg(v_i)+1}$ or $\text{(1+parity)}_{\deg(v_i)+1}$. Therefore, each local function $F_{v_i}\colon\mathbb F_2^n\to\mathbb F_2^n$ is an involution. There is a natural correspondence between the system states of the SDS and the subsets of $V$; the system state $(q_1,\ldots,q_n)$ corresponds to the set $\{v_i\in V\colon q_i=1\}$. If we use this correspondence to view each local function $F_{v_i}$ as a map from $2^{V}$ to $2^{V}$, then $F_{v_i}$ is actually a flexible toggle of $V$ at $v_i$ (with respect to the fixed set $\mathscr L=2^V$). Let $H$ be the flexible toggle group of $V$ generated by $\{F_{v_1},\ldots,F_{v_n}\}$. The Coxeter elements of $H$ are precisely the SDS maps of the form $[G,\{f_{v_i}\}_{i=1}^n,\pi]$ for $\pi\in S_n$. 

The preceding paragraph suggests that one might be able to study certain flexible toggle groups by recasting them in the language of SDS. Results from the paper \cite{Macauley2} give upper bounds on the number of cyclically nonequivalent SDS maps one can obtain by varying the update orders of certain sequential dynamical systems; these results then yield upper bounds for the number of dynamically nonequivalent (these SDS maps are bijective, so cyclical equivalence is the same as dynamical equivalence) Coxeter elements in certain flexible toggle groups.  

\section{Dynamics of Affine Transformations}
An affine transformation of a vector space $V$ is a map $T\colon V\to V$ given by $T(v)=L(v)+b$ for some linear map $L\colon V\to V$ and some fixed $b\in V$. In this section, we review some results and terminology from linear algebra and discuss previous work of Elspas \cite{Elspas} and Milligan and Wilson \cite{Milligan} concerning the dynamics of affine transformations of $\mathbb F_2$-vector spaces. 

If $R$ is a commutative ring and $M$ is an $R$-module, we say an element $a$ of $R$ \emph{annihilates} an element $z$ of $M$ if $az=0$. We say $a$ annihilates a subset $Z$ of $M$ if $az=0$ for all $z\in Z$. Suppose $V$ is a finite-dimensional vector space over a field $F$ and that $L\colon V\to V$ is a fixed linear transformation. The linear map $L$ makes $V$ into a module over the polynomial ring $F[x]$. The action of a polynomial $p(x)=a_kx^k+\cdots+a_1x+a_0$ on $v\in V$ is given by $p(x)v=a_kL^k(v)+\cdots+a_1L(v)+a_0v$. For any $p(x)\in F[x]$, the set of vectors in $V$ that $p(x)$ annihilates forms a vector subspace of $V$. We say that $L$ \emph{satisfies} $p(x)$ if $p(x)$ annihilates $V$. There is a unique monic polynomial of minimum degree that $L$ satisfies. This polynomial, denoted $m_L(x)$, is called the \emph{minimal polynomial} of $L$. A polynomial $p(x)\in F[x]$ is divisible by $m_L(x)$ if and only if $L$ satisfies $p(x)$. 

Let $I_m$ denote the $m\times m$ identity matrix over a field $F$. The \emph{characteristic polynomial} of an $m\times m$ matrix $M$ is the polynomial $\det(xI_m-M)$. If $M$ is a matrix of $L$ with respect to some basis for $V$, then the characteristic polynomial of $L$, denoted $\chi_L(x)$, is simply the characteristic polynomial of $M$. The Cayley-Hamilton theorem states that $L$ satisfies $\chi_L(x)$. Equivalently, $m_L(x)$ divides $\chi_L(x)$ in $F[x]$. 

For each fixed $L$, there exist monic polynomials $\alpha_1(x),\ldots,\alpha_t(x)$ such that $\alpha_1(x)\mid\cdots\mid\alpha_t(x)$ and \[V\cong \bigoplus_{i=1}^tF[x]/(\alpha_i(x))\] (this is an isomorphism of $F[x]$-modules). Moreover, $\prod_{i=1}^t\alpha_i(x)=\chi_L(x)$ and $\alpha_t(x)=m_L(x)$. It follows that if $p(x)$ is a separable polynomial (a polynomial with no repeated factors) that divides $\chi_L(x)$, then $p(x)$ divides $m_L(x)$. The polynomials $\alpha_i(x)$ are called the \emph{invariant factors} of $L$, and the sequence $\alpha_1(x),\ldots,\alpha_t(x)$ is the \emph{invariant factor decomposition} of $L$. If $\alpha_i(x)=q_{i,1}(x)^{\gamma_{i,1}}\cdots q_{i,\kappa_i}(x)^{\gamma_{i,\kappa_i}}$ is the factorization of $\alpha_i(x)$ into a product of powers of monic irreducible polynomials, then the polynomials $q_{i,j}(x)^{\gamma_{i,j}}$ are the \emph{elementary divisors} of $L$. 

In \cite{Elspas}, Elspas showed how to completely determine the dynamics of the linear transformation $L$ from the list of elementary divisors of $L$ provided $F$ is a finite field. Elspas' work is neatly summarized and extended in \cite{Toledo}. Milligan and Wilson built upon Elspas' results to determine the dynamics of an affine transformation $T\colon \mathbb F_2^n\to\mathbb F_2^n$. Their results also require knowledge of the elementary divisors of a certain linear map $\widetilde T$ obtained from $T$ (see Definition \ref{Def3} below for the definition of $\widetilde T$). In contrast, our method only requires knowledge of the invariant factors of these linear transformations. This method makes heavy use of the following proposition. 

\begin{proposition}\label{Prop1}
Let $V$ be a finite-dimensional vector space over a field $F$. Let $L\colon V\to V$ be a linear transformation whose invariant factor decomposition is $\alpha_1(x),\ldots,\alpha_t(x)$. For each positive integer $r$, the set $\text{\emph{Per}}_1(L^r)$ of fixed points of $L^r$ is a vector subspace of $V$ of dimension \[\sum_{i=1}^t\deg(\gcd(x^r-1,\alpha_i(x))).\]
\end{proposition}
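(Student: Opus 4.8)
The plan is to identify $\text{Per}_1(L^r)$ with a kernel and then compute its dimension using the cyclic decomposition of $V$ as an $F[x]$-module. First I would observe that $v\in\text{Per}_1(L^r)$ means exactly $L^r(v)=v$, that is, $(L^r-\text{id})v=0$. Since $L^r-\text{id}$ is precisely the action of the polynomial $x^r-1$ on the module $V$, we have $\text{Per}_1(L^r)=\ker(L^r-\text{id})$, the set of vectors annihilated by $x^r-1$. Being the kernel of a linear map, this is automatically a vector subspace of $V$, which dispatches the first assertion of the proposition.

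Next I would invoke the stated isomorphism $V\cong\bigoplus_{i=1}^t F[x]/(\alpha_i(x))$ of $F[x]$-modules. Because each summand is invariant under the action of $x$, it is invariant under the action of every polynomial in $L$, and in particular under $x^r-1$; hence the action of $x^r-1$ respects the direct sum and acts summand by summand. Consequently the kernel decomposes as $\ker(x^r-1)\cong\bigoplus_{i=1}^t\ker\bigl((x^r-1)\colon F[x]/(\alpha_i)\to F[x]/(\alpha_i)\bigr)$, so that $\dim\text{Per}_1(L^r)=\sum_{i=1}^t\dim\ker\bigl((x^r-1)|_{F[x]/(\alpha_i)}\bigr)$. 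This reduces the problem to computing the dimension of the kernel of multiplication by $g(x):=x^r-1$ on a single cyclic module $F[x]/(\alpha(x))$.

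The one real computation is this last dimension count. I would set $d(x)=\gcd(g(x),\alpha(x))$ and write $\alpha=d\,a$ and $g=d\,b$ with $\gcd(a,b)=1$. A coset $\overline{p(x)}\in F[x]/(\alpha)$ lies in the kernel iff $\alpha\mid gp$, i.e. $d\,a\mid d\,b\,p$, i.e. $a\mid b\,p$; since $a$ and $b$ are coprime, this holds iff $a\mid p$. Therefore the kernel is exactly the image of the ideal $(a(x))$ in $F[x]/(\alpha(x))$, which has dimension $\deg\alpha-\deg a=\deg d=\deg\gcd(x^r-1,\alpha)$. Summing over $i$ then yields $\dim\text{Per}_1(L^r)=\sum_{i=1}^t\deg\gcd(x^r-1,\alpha_i(x))$, as claimed.

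I do not anticipate a genuine obstacle: the reduction to cyclic modules is immediate from the structure theorem quoted in the text, and the final paragraph is elementary once the coprime factorization $\alpha=d\,a$, $g=d\,b$ is in place. The only point deserving a word of care is the distributivity of $\ker$ over the direct summands, which I would justify by the remark that each summand is $x$-invariant and hence stable under $x^r-1$.
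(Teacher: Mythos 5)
Your proof is correct and follows essentially the same route as the paper: identify $\text{Per}_1(L^r)$ with the set of vectors annihilated by $x^r-1$, decompose along the invariant factor isomorphism $V\cong\bigoplus_i F[x]/(\alpha_i(x))$, and show the kernel on each cyclic summand consists of the multiples of $\alpha_i/\gcd(x^r-1,\alpha_i)$. The only cosmetic difference is that the paper exhibits an explicit isomorphism of that kernel onto $F[x]/(\gcd(x^r-1,\alpha_i(x)))$, whereas you count its dimension directly as $\deg\alpha_i-\deg(\alpha_i/\gcd)$; both give the same answer.
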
       
\begin{proof}
Fix a positive integer $r$. It is clear that $\text{Per}_1(L^r)$ is a vector subspace of $V$ since $L^r$ is a linear map. In fact, if we make $V$ into an $F[x]$-module via the linear transformation $L$, then $\text{Per}_1(L^r)$ is precisely the subspace of all vectors annihilated by the polynomial $x^r-1$. There is an $F[x]$-module isomorphism $h\colon V\to \bigoplus_{i=1}^tF[x]/(\alpha_i(x))$, and $h(\text{Per}_1(L^r))$ is the set of elements of $\bigoplus_{i=1}^tF[x]/(\alpha_i(x))$ annihilated by $x^r-1$. Let $Y_i$ denote the set of elements of the $F[x]$-module $F[x]/(\alpha_i(x))$ annihilated by $x^r-1$. We have $h(\text{Per}_1(L^r))=\bigoplus_{i=1}^tY_i$, so it suffices to show that $Y_i$ is an $F$-vector space of dimension $\deg(\gcd(x^r-1,\alpha_i(x)))$. 

Let $Q_i(x)=\gcd(x^r-1,\alpha_i(x))$. A coset $p(x)+(\alpha_i(x))$ is an element of $Y_i$ if and only if $\alpha_i(x)$ divides $(x^r-1)p(x)$. This occurs if and only if $\alpha_i(x)/Q_i(x)$ divides $p(x)$. As a consequence, one may easily verify that the map $g\colon Y_i\to F[x]/(Q_i(x))$ given by \[g(p(x)+(\alpha_i(x)))=p(x)Q_i(x)/\alpha_i(x)+(Q_i(x))\] is a well-defined isomorphism of $F$-vector spaces. The desired result follows since the dimension of $F[x]/(Q_i(x))$ is $\deg(Q_i(x))$.    
\end{proof}

Proposition \ref{Prop1} and the classical number-theoretic M\"obius inversion formula allow us to determine the dynamics of any vector space automorphism of $\mathbb F_2^n$ provided we know the invariant factor decomposition of the automorphism. Suppose, however, that we wish to study the dynamics of a bijective affine transformation $T\colon \mathbb F_2^n\to\mathbb F_2^n$. Following \cite{Milligan}, we make the following definition. 

\begin{definition}\label{Def3}
If $M$ is an $n\times n$ matrix with entries in $\mathbb F_2$ and $b=(b_1,\ldots,b_n)\in\mathbb F_2^n$, let \[\widetilde M_b=\left( \begin{array}{cccc}
1 & 0 & \cdots & 0 \\
b_1 &   &   &   \\
\vdots &   & M &   \\
b_n &   &   &   \end{array} \right).\]
Suppose $T\colon\mathbb F_2^n\to\mathbb F_2^n$ is an affine transformation given by $T(v)=L(v)+b$, where $L\colon\mathbb F_2^n\to\mathbb F_2^n$ is a linear map whose matrix with respect to the standard basis for $\mathbb F_2^n$ is $M$. Define $\widetilde T\colon\mathbb F_2^{n+1}\to\mathbb F_2^{n+1}$ to be the linear map whose matrix with respect to the standard basis for $\mathbb F_2^{n+1}$ is $\widetilde M_b$. 
\end{definition}

For $i\in\mathbb F_2$, put $Z_i=\{(z_0,z_1,\ldots,z_n)\in\mathbb F_2^{n+1} : z_0=i\}$, and let $P_i\colon Z_i\to\mathbb F_2^n$ be the bijection given by $P_i(i,x_1,\ldots,x_n)=(x_1,\ldots,x_n)$. Preserve the notation from Definition \ref{Def3}. Each set $Z_i$ is invariant under $\widetilde T$ (meaning $\widetilde T(Z_i)\subseteq Z_i$). Furthermore, $\widetilde T\vert_{Z_0}\colon Z_0\to Z_0$ and $\widetilde T\vert_{Z_1}\colon Z_1\to Z_1$ are dynamically equivalent to $L$ and $T$, respectively. Indeed, $L=P_0\circ\widetilde T\vert_{Z_0}\circ P_0^{-1}$ and $T=P_1\circ\widetilde T\vert_{Z_1}\circ P_1^{-1}$. Consequently, we can deduce the dynamics of $T$ from the dynamics of $\widetilde T$ and $L$. More precisely,  
\begin{equation}\label{Eq4}
\vert\text{Per}_1(T^r)\vert=\vert\text{Per}_1(\widetilde T^r)\vert-\vert\text{Per}_1(L^r)\vert \hspace{0.5cm}\text{and}\hspace{0.5cm}\vert\text{Per}_r(T)\vert=\vert\text{Per}_r(\widetilde T)\vert-\vert\text{Per}_r(L)\vert
\end{equation} 
for all positive integers $r$.

Many of the results in \cite{Milligan} relate the invariant factors and the dynamics of $L$ to those of $\widetilde T$. The following proposition summarizes those results that we need. Note that part of this proposition follows from the proof of Lemma 4 in \cite{Milligan} rather than the lemma itself.  

\begin{proposition}[\cite{Milligan}, Lemmas 2 and 4]\label{Prop2}
Preserve the notation from Definition \ref{Def3}. Let $\alpha_1(x),$ $\ldots,\alpha_t(x)$ be the invariant factor decomposition of $L$, and let $\beta_1(x),\ldots,\beta_u(x)$ be the invariant factor decomposition of $\widetilde T$. The maps $L$ and $T$ are cyclically equivalent if and only if $T$ has a fixed point. If $T$ has a fixed point, then $u=t+1$, $\beta_1(x)=x+1$, and $\alpha_i(x)=\beta_{i+1}(x)$ whenever $1\leq i\leq t$. If $T$ does not have a fixed point, then $u=t$. In this case, there exists $j\in\{1,\ldots,t\}$ such that $\beta_j(x)=(x+1)\alpha_j(x)$ and $\alpha_i(x)=\beta_i(x)$ whenever $i\neq j$. 
\end{proposition}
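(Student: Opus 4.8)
My plan is to derive everything from the $\mathbb F_2[x]$-module structure that $\widetilde T$ induces on $\mathbb F_2^{n+1}$, together with the two invariant pieces $Z_0,Z_1$ defined after Definition \ref{Def3}. I would first dispose of the cyclic-equivalence claim, which is logically separate from the invariant factor computation. If $T$ has a fixed point $v_0$, then the translation $\tau\colon v\mapsto v+v_0$ satisfies $\tau^{-1}\circ T\circ\tau=L$ — a one-line check using $L(v_0)+b=v_0$ — so $T$ and $L$ are dynamically, hence cyclically, equivalent. Conversely, if $T$ has no fixed point then $\text{Per}_1(T)=\varnothing$ while $0\in\text{Per}_1(L)$, so the two maps cannot be cyclically equivalent.

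For the invariant factors, I would record first that $\widetilde M_b$ is block lower triangular with diagonal blocks $(1)$ and $M$, so $\chi_{\widetilde T}(x)=(x+1)\chi_L(x)$. Viewing $\mathbb F_2^{n+1}$ as a module via $\widetilde T$, the subspace $Z_0$ is a submodule isomorphic (through $P_0$) to the $L$-module $\mathbb F_2^n$, and the quotient $\mathbb F_2^{n+1}/Z_0$ is one-dimensional with $\widetilde T$ acting as the identity, hence isomorphic to $\mathbb F_2[x]/(x+1)$. Writing $V_L$ and $V_{\widetilde T}$ for these module structures, this gives a short exact sequence \[0\longrightarrow V_L\longrightarrow V_{\widetilde T}\longrightarrow\mathbb F_2[x]/(x+1)\longrightarrow 0.\] The hinge of the argument is that this sequence splits if and only if $T$ has a fixed point: a complement must be an invariant line spanned by a $\widetilde T$-fixed vector whose first coordinate is $1$, and via $P_1$ such vectors correspond exactly to the fixed points of $T=P_1\circ\widetilde T\vert_{Z_1}\circ P_1^{-1}$.

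When $T$ has a fixed point the sequence splits, giving $V_{\widetilde T}\cong V_L\oplus\mathbb F_2[x]/(x+1)$; the claimed decomposition $\beta_1=x+1$, $\beta_{i+1}=\alpha_i$ is then obtained by assembling the elementary divisors of this direct sum into invariant factors. When $T$ has no fixed point the extension is non-split; here $m_L\mid m_{\widetilde T}\mid(x+1)m_L$ forces $m_{\widetilde T}=(x+1)m_L$, and since the prime-to-$(x+1)$ parts of the two modules agree automatically, the whole question localizes at the prime $(x+1)$. From $\widetilde T(e_0)=e_0+b$ I get $(x+1)e_0=b$ with $b\notin\mathrm{im}(L+I)=(x+1)V_L$, and I would use this to show that adjoining the generator $e_0$ raises the $(x+1)$-adic exponent of precisely one invariant factor by $1$, producing the index $j$ with $\beta_j=(x+1)\alpha_j$ and $\beta_i=\alpha_i$ for $i\neq j$, and $u=t$.

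I expect the main obstacle to be this final step: understanding exactly how a non-split extension by $\mathbb F_2[x]/(x+1)$ perturbs the invariant factor decomposition. The subtlety is that $b$ may have nonzero components in several $(x+1)$-primary cyclic summands of $V_L$, so I anticipate needing a change of basis in the $(x+1)$-primary part — justified by the fact that $b$ is nonzero in the top quotient $V_L/(x+1)V_L$ — to concentrate the effect on a single summand, thereby identifying $j$ and verifying that the resulting list still forms a divisibility chain. As a consistency check, once the $\beta_j$ are determined in both cases, Proposition \ref{Prop1} together with the M\"obius inversion \eqref{Eq3} recomputes $\vert\text{Per}_r(\widetilde T)\vert$ and $\vert\text{Per}_r(L)\vert$, which via \eqref{Eq4} must reproduce the cyclic-equivalence dichotomy established at the outset.
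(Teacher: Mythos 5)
The paper itself offers no proof of this proposition---it is imported wholesale from Lemmas 2 and 4 of \cite{Milligan}---so there is nothing internal to compare against; your module-theoretic skeleton (the short exact sequence $0\to V_L\to V_{\widetilde T}\to\mathbb F_2[x]/(x+1)\to 0$ coming from the invariant subspace $Z_0$, and the observation that it splits exactly when $\widetilde T$ fixes a vector with first coordinate $1$, i.e., when $T$ has a fixed point) is the right one, and your treatment of the cyclic-equivalence dichotomy is complete and correct. But the proposal has real gaps at both places where invariant factors are actually computed. In the split case, ``assembling the elementary divisors of $V_L\oplus\mathbb F_2[x]/(x+1)$ into invariant factors'' produces the list $x+1,\alpha_1,\ldots,\alpha_t$ only when $(x+1)\mid\alpha_1(x)$; if $x+1$ fails to divide every invariant factor of $L$, the extra copy of $\mathbb F_2[x]/(x+1)$ is absorbed by the Chinese Remainder Theorem into one of the existing factors. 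For instance, if $\chi_L(x)=x^2+x+1$ is irreducible, then $L+I$ is invertible, every such $T$ has a fixed point, and $V_{\widetilde T}\cong\mathbb F_2[x]/\bigl((x+1)(x^2+x+1)\bigr)$ is cyclic, so $u=t=1$ rather than $t+1$. The statement (and hence any proof of it) tacitly needs $(x+1)\mid\alpha_1(x)$; this does hold for the maps $L_n$ to which the paper applies the proposition, by Lemma \ref{Lem2}, but your argument should say where it is used.

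In the non-split case there are two problems. The intermediate assertion that non-splitness ``forces $m_{\widetilde T}=(x+1)m_L$'' is false: taking $V_L\cong\mathbb F_2[x]/(x+1)\oplus\mathbb F_2[x]/((x+1)^2)$ and $b$ a generator of the first summand gives a fixed-point-free $T$ with $V_{\widetilde T}\cong\mathbb F_2[x]/((x+1)^2)\oplus\mathbb F_2[x]/((x+1)^2)$, so $m_{\widetilde T}=m_L$ and $j=1$, not $j=t$. More importantly, the heart of the matter---that adjoining one generator $e_0$ with $(x+1)e_0=b\notin(x+1)V_L$ multiplies exactly one invariant factor by $x+1$ and leaves a valid divisibility chain---is announced as the expected obstacle rather than proved. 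Your instinct to change basis in the $(x+1)$-primary part so that $b$ hits a single cyclic summand is correct, but identifying which summand (it is governed by the largest exponent among the summands where $b$ has a unit component, not by an arbitrary one) and checking that the resulting list is still totally ordered by divisibility is precisely the content of Milligan and Wilson's Lemma 4. As it stands, the proposal is a sound plan whose two decisive computations remain open.
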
 

If $T$ has a fixed point $a$, then $L$ and $T$ are actually dynamically equivalent. The map $\eta\colon\mathbb F_2^n\to\mathbb F_2^n$ given by $\eta(z)=z+a$ is a bijection such that $T\circ\eta=\eta\circ L$. Indeed, for any $z\in\mathbb F_2^n$, \[T(\eta(z))=T(z+a)=L(z)+L(a)+b=L(z)+T(a)=L(z)+a=\eta(L(z)).\] 

The equation $T(a)=a$ is equivalent to $b=L(a)+a$. Therefore, the affine transformation $T$ has a fixed point if and only if $b$ is an element of the subspace $\{L(z)+z : z\in\mathbb F_2^n\}$ of $\mathbb F_2^n$. This observation shows that it is often not too difficult to see whether or not $T$ has a fixed point, which illustrates why Proposition \ref{Prop2} is so powerful.    

\section{Dynamics of Symmetric Invertible Asynchronous Elementary Cellular Automata} 
Throughout this section, fix $T$ to be the SDS map of an SIAECA with the identity update order. Equivalently, $T=[C_n,\{f_{v_i}\}_{i=1}^n,\text{id}]\colon\mathbb F_2^n\to\mathbb F_2^n$, where $f_{v_i}\in\{\text{parity}_3,(1+\text{parity})_3\}$ for all $1\leq i\leq n$. To ease notation, let $L_n=[C_n,\text{parity}_3,\text{id}]$. The following lemma, whose straightforward proof we omit, illustrates why we needed to discuss the dynamics of affine linear transformations in the previous section. 
\begin{lemma}\label{Lem1}
The map $L_n\colon\mathbb F_2^n\to\mathbb F_2^n$ is an $\mathbb F_2$-linear map given explicitly by \[L_n(y_1,\ldots,y_n)=(y_1+y_2+y_n,y_1+y_3+y_n,y_1+y_4+y_n,\ldots,y_1+y_{n-1}+y_n,y_1,y_2).\] The map $T\colon\mathbb F_2^n\to\mathbb F_2^n$ is an affine transformation given by $T(v)=L_n(v)+b$ for some fixed $b\in\mathbb F_2^n$.
\end{lemma}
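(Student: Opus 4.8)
The plan is to establish the linearity and explicit formula for $L_n$ by tracking the system state as the local update functions are applied one at a time in the order $F_{v_1},F_{v_2},\ldots,F_{v_n}$, and then to deduce the affine structure of $T$ from the fact that its local functions differ from those of $L_n$ only by constants.

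First I would observe that each local update function of $L_n$ is $\mathbb{F}_2$-linear. Since the neighbors of $v_i$ in $C_n$ are $v_{i-1}$ and $v_{i+1}$ (with indices read modulo $n$), applying $F_{v_i}$ replaces the $i$-th coordinate of the current state by $x_{i-1}+x_i+x_{i+1}$ and fixes every other coordinate, which is plainly an $\mathbb{F}_2$-linear operation. As $L_n=F_{v_n}\circ\cdots\circ F_{v_1}$ is a composition of linear maps, it is itself linear.

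To obtain the explicit formula, I would prove by induction on $k$ that after the partial composition $F_{v_k}\circ\cdots\circ F_{v_1}$ is applied to the input $(y_1,\ldots,y_n)$, the $i$-th coordinate equals $y_1+y_{i+1}+y_n$ for each $1\le i\le k$, while coordinates $k+1,\ldots,n$ still equal $y_{k+1},\ldots,y_n$. The base case $k=1$ is immediate because the neighbors of $v_1$ are $v_n$ and $v_2$. For the inductive step with $2\le k\le n-1$, applying $F_{v_k}$ replaces the $k$-th coordinate by the sum of the already-updated $(k-1)$-st coordinate, the $k$-th coordinate, and the not-yet-updated $(k+1)$-st coordinate; by the inductive hypothesis this equals $(y_1+y_k+y_n)+y_k+y_{k+1}=y_1+y_{k+1}+y_n$, since $2y_k=0$ in $\mathbb{F}_2$. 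In particular, after $k=n-1$ the $(n-1)$-st coordinate is $y_1+y_n+y_n=y_1$.

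The one step that needs separate handling is the last, where the cyclic indexing wraps around: the neighbors of $v_n$ are $v_{n-1}$ and $v_1$. At this point coordinate $n-1$ equals $y_1$, coordinate $n$ still equals $y_n$, and coordinate $1$ equals $y_1+y_2+y_n$, so $F_{v_n}$ sends the last coordinate to $y_1+y_n+(y_1+y_2+y_n)=y_2$; assembling the coordinates yields precisely the stated formula. For the affine claim, I would note that each vertex function of $T$ is either $\text{parity}_3$ or $(1+\text{parity})_3=\text{parity}_3+1$, so each local function of $T$ either coincides with the corresponding local function of $L_n$ or differs from it by adding $1$ to a single coordinate; in both cases it is affine with the same linear part as the corresponding local function of $L_n$. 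Because the linear part of a composition of affine maps is the composition of the linear parts, the linear part of $T$ is exactly $L_n$, and hence $T(v)=L_n(v)+b$ for some fixed $b\in\mathbb{F}_2^n$. The whole argument is routine; the only mild obstacle is the careful bookkeeping of the wraparound in this final update.
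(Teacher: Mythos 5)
Your proof is correct: the coordinate-by-coordinate induction through the sequential updates, with the wraparound at $F_{v_n}$ handled separately, yields exactly the stated formula, and the affine claim follows as you say from composing affine maps with the same linear parts. The paper omits its proof as ``straightforward,'' and this direct computation is precisely the argument intended.
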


In order to study the dynamics of the affine map $T$, we first analyze the linear map $L_n$. Let $B_n$ denote the matrix of $L_n$ with respect to the standard basis for $\mathbb F_2^n$. Keeping in mind that the entries in $B_n$ are elements of $\mathbb F_2$, we can compute the characteristic polynomial of $L_n$ as follows. First, add the first column of $B_n+xI_n$ to the last column of $B_n+xI_n$. Next, add the last row of the resulting matrix to its first row. Let $D_n(x)$ be the matrix obtained after performing these two operations. 
For example, in the case $n=4$, we have \[B_4+xI_4=\left( \begin{array}{cccc}
x+1 & 1 & 0 & 1 \\
1 & x & 1 & 1 \\
1 & 0 & x & 0 \\
0 & 1 & 0 & x \end{array} \right)\hspace{0.5cm}\text{and}\hspace{0.5cm}D_4(x)=\left( \begin{array}{cccc}
x+1 & 0 & 0 & 0 \\
1 & x & 1 & 0 \\
1 & 0 & x & 1 \\
0 & 1 & 0 & x \end{array} \right).\]

The first row of $D_n(x)$ is the $n$-tuple whose first entry is $x+1$ and whose other entries are all $0$'s. Let $I_m'$ denote the $m\times m$ matrix whose $(i,j)$-entry is $1$ if $j\equiv i+1\pmod{m}$ and is $0$ otherwise. The matrix obtained by deleting the first row and the first column of $D_n(x)$ is $I_{n-1}'+xI_{n-1}$. Therefore, \[\chi_{L_n}(x)=\det(B_n+xI_n)=\det(D_n(x))=(x+1)\det(I_{n-1}'+xI_{n-1}).\] Using cofactor expansion along the first column, we easily find that the determinant of $I_{n-1}'+xI_{n-1}$ is $x^{n-1}-1$. Consequently, 
\begin{equation}\label{Eq2}
\chi_{L_n}(x)=(x+1)(x^{n-1}-1).
\end{equation}    

To determine the invariant factor decomposition of $L_n$, we need to know a bit more information about the dynamics of $L_n$. To obtain this information, we follow Chapter 5 of \cite{Mortveit} and make the following definitions. 
\begin{definition} \label{Def1}
Define a map $\iota_n\colon\mathbb F_2^n\rightarrow\mathbb F_2^{2n-2}$ by \[\iota_n(a_1,\ldots,a_n)=(a_1,\ldots,a_n,a_1+a_2+a_n,a_1+a_3+a_n,\ldots,a_1+a_{n-1}+a_n).\] Let $\widehat{\mathbb F}_2^{2n-2}=\iota_n(\mathbb F_2^n)$ be the image of $\mathbb F_2^n$ under $\iota_n$. 
\end{definition} 
\begin{definition} \label{Def2} 
Define a shift map $\sigma_{2n-2}\colon\mathbb F_2^{2n-2}\rightarrow\mathbb F_2^{2n-2}$ by \[\sigma_{2n-2}(a_1,\ldots,a_{2n-2})=(a_{n+1},a_{n+2},\ldots,a_{2n-2},a_1,\ldots,a_n).\] Let $\overline\sigma_{2n-2}\colon\widehat{\mathbb F}_2^{2n-2}\rightarrow\widehat{\mathbb F}_2^{2n-2}$ be the restriction of $\sigma_{2n-2}$ to $\widehat{\mathbb F}_2^{2n-2}$.
\end{definition}
One may easily verify that if $z\in\widehat{\mathbb F}_2^{2n-2}$, then $\sigma_{2n-2}(z)\in\widehat{\mathbb F}_2^{2n-2}$. This means that the range of $\overline\sigma_{2n-2}$ is indeed contained in $\widehat{\mathbb F}_2^{2n-2}$, so it is valid to write $\overline\sigma_{2n-2}\colon\widehat{\mathbb F}_2^{2n-2}\rightarrow\widehat{\mathbb F}_2^{2n-2}$. 
 
The following fact, whose proof appears within the proof of Proposition 5.23 in \cite{Mortveit}, provides useful information about the dynamics of $L_n$.  
\begin{fact} \label{Thm2}
The map $\iota_n\colon\mathbb F_2^n\rightarrow\widehat{\mathbb F}_2^{2n-2}$ is a vector space isomorphism such that $\iota_n\circ L_n=\overline{\sigma}_{2n-2}\circ\iota_n$. Thus, the maps $L_n$ and $\overline\sigma_{2n-2}$ are dynamically equivalent. 
\end{fact}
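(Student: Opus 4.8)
The plan is to establish the three assertions in turn: that $\iota_n$ is a vector space isomorphism onto $\widehat{\mathbb F}_2^{2n-2}$, that the intertwining relation $\iota_n\circ L_n=\overline\sigma_{2n-2}\circ\iota_n$ holds, and that dynamical equivalence then follows formally.

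First I would dispense with the isomorphism claim. Every coordinate of $\iota_n(a_1,\ldots,a_n)$ is an $\mathbb F_2$-linear form in the $a_i$, so $\iota_n$ is linear. Since $\widehat{\mathbb F}_2^{2n-2}$ is by definition the image $\iota_n(\mathbb F_2^n)$, the map $\iota_n\colon\mathbb F_2^n\to\widehat{\mathbb F}_2^{2n-2}$ is automatically surjective; for injectivity, I would observe that the first $n$ output coordinates of $\iota_n(a_1,\ldots,a_n)$ are exactly $a_1,\ldots,a_n$, so the input is recovered from its image. A linear bijection is a vector space isomorphism, which settles this part.

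The heart of the argument is the intertwining relation, which I would verify by applying both sides to a general vector $(a_1,\ldots,a_n)$ and comparing coordinates. On one hand, $\iota_n(a_1,\ldots,a_n)$ has last $n-2$ entries $a_1+a_k+a_n$ for $2\le k\le n-1$, so $\sigma_{2n-2}(\iota_n(a_1,\ldots,a_n))$ moves this block to the front, yielding $(a_1+a_2+a_n,\ldots,a_1+a_{n-1}+a_n,a_1,a_2,\ldots,a_n)$. On the other hand, writing $b=L_n(a_1,\ldots,a_n)$, the first $n$ entries of $\iota_n(b)$ are just $b$ itself, and by the explicit formula for $L_n$ in Lemma \ref{Lem1} these agree with the first $n$ entries just computed. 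For the remaining $n-2$ coordinates one must compute $b_1+b_k+b_n$; here the main point, and the only place any care is needed, is that working over $\mathbb F_2$ causes the repeated summands $a_1$ and $a_n$ to cancel, and $b_n=a_2$ to cancel against the $a_2$ inside $b_1$, so these entries collapse to $a_3,a_4,\ldots,a_n$, which is exactly the tail of $\sigma_{2n-2}(\iota_n(a_1,\ldots,a_n))$. Since $\iota_n(b)$ lies in $\widehat{\mathbb F}_2^{2n-2}$ and $\overline\sigma_{2n-2}$ is the restriction of $\sigma_{2n-2}$ to that subspace, the relation $\iota_n\circ L_n=\overline\sigma_{2n-2}\circ\iota_n$ follows.

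I expect this coordinate bookkeeping to be the only genuine obstacle; it is routine but must be carried out carefully, tracking the index ranges and the $\mathbb F_2$ cancellations that make the telescoping work. Once both parts are in hand, dynamical equivalence is immediate: by the first part $\iota_n$ is a bijection $\mathbb F_2^n\to\widehat{\mathbb F}_2^{2n-2}$, and by the second part it conjugates $L_n$ to $\overline\sigma_{2n-2}$, so taking $h=\iota_n$ in the definition of dynamical equivalence completes the proof.
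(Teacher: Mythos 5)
Your proposal is correct: the coordinate computation checks out (for $2\le k\le n-2$ one gets $b_1+b_k+b_n=a_{k+1}$ and for $k=n-1$ one gets $a_n$, matching the tail of $\sigma_{2n-2}(\iota_n(a))$), and the injectivity and conjugacy arguments are exactly right. The paper itself does not prove this fact but cites the proof of Proposition 5.23 in Mortveit--Reidys; your direct verification is the standard argument one would find there, so there is nothing to criticize.
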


The following corollary to Fact \ref{Thm2} is essentially Proposition 5.23\footnote{The statement of Proposition 5.23 in \cite{Mortveit} is slightly mistaken. However, it is clear from the proof how to amend the proposition, and we have done so in the statement of Corollary \ref{Cor1}.} in \cite{Mortveit}. 
\begin{corollary} \label{Cor1} 
If $L_n$ has a periodic point of period $r$, then $r\mid 2n-2$. If, in addition, $n$ is even, then $r\mid n-1$.  
\end{corollary}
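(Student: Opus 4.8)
The plan is to exploit Fact~\ref{Thm2}, which tells us that $L_n$ is dynamically equivalent to the restricted shift map $\overline\sigma_{2n-2}$ acting on $\widehat{\mathbb F}_2^{2n-2}$. Since dynamically equivalent maps have isomorphic phase spaces, $L_n$ and $\overline\sigma_{2n-2}$ have exactly the same set of periods, so it suffices to prove the corollary for $\overline\sigma_{2n-2}$. The key observation is that $\overline\sigma_{2n-2}$ is the restriction of the genuine shift $\sigma_{2n-2}$, which cyclically rotates a vector in $\mathbb F_2^{2n-2}$ by $n-1$ positions (it swaps the first block of $n-1$ coordinates, indices $1,\ldots,n-1$, with a shifted copy of the second, and so on). First I would pin down exactly what order $\sigma_{2n-2}$ has as a permutation of coordinates: applying it once shifts by $n-1$, so applying it $k$ times shifts every coordinate by $k(n-1)$ modulo $2n-2$. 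Hence $\sigma_{2n-2}^k = \text{id}$ if and only if $k(n-1)\equiv 0\pmod{2n-2}$, i.e.\ if and only if $(2n-2)\mid k(n-1)$, which simplifies to $2\mid k$. Therefore $\sigma_{2n-2}$ has order exactly $2$ on all of $\mathbb F_2^{2n-2}$.

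Next I would transfer this to the restriction. Because $\overline\sigma_{2n-2}$ is just $\sigma_{2n-2}$ restricted to the invariant subspace $\widehat{\mathbb F}_2^{2n-2}$, every point of $\widehat{\mathbb F}_2^{2n-2}$ satisfies $\overline\sigma_{2n-2}^2 = \text{id}$, so each periodic point has period dividing $2$. Pulling back through the isomorphism $\iota_n$, this already shows that every period $r$ of $L_n$ divides $2$, which would be far stronger than the claimed $r\mid 2n-2$. This suggests that the intended reading of the shift is different: the definition of $\sigma_{2n-2}$ must actually encode a shift by a single coordinate (or by some amount coprime-ish to $2n-2$) rather than by the full block of $n-1$, so that its order on $\mathbb F_2^{2n-2}$ is $2n-2$ rather than $2$. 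Concretely, I would re-examine Definition~\ref{Def2} to read off the true cyclic structure: if $\sigma_{2n-2}$ is an $(2n-2)$-cycle on coordinates (the natural "shift" interpretation consistent with the surrounding $\mathbb F_2[x]/(x^{2n-2}-1)$ framework), then $\sigma_{2n-2}^r=\text{id}$ on $\mathbb F_2^{2n-2}$ exactly when $r\mid 2n-2$, and any periodic point of the \emph{restriction} has period dividing the order of the ambient shift, giving $r\mid 2n-2$ immediately.

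So the first main step is to determine the exact order of $\sigma_{2n-2}$ (equivalently, the orbit lengths of its action on the $2n-2$ coordinate positions), and then to note that for the restriction of a finite-order permutation to an invariant subspace, every period divides that order; combining with Fact~\ref{Thm2} yields the first sentence $r\mid 2n-2$. For the second, sharper claim under the hypothesis that $n$ is even, I would argue as follows. When $n$ is even, $2n-2 = 2(n-1)$ with $n-1$ odd, so $\gcd(2, n-1)=1$. The periodic points of $\overline\sigma_{2n-2}$ of a given period $r$ correspond, via $\iota_n$, to periodic points of $L_n$, and I would instead approach this step algebraically through $\chi_{L_n}(x) = (x+1)(x^{n-1}-1)$ from~\eqref{Eq2}. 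By Proposition~\ref{Prop1}, the period of any periodic point of $L_n$ is governed by which $x^r-1$ share common factors with the invariant factors, all of which divide $\chi_{L_n}(x)$. Since a period-$r$ point requires $x^r-1$ to have a nontrivial common factor with some invariant factor, and every irreducible factor of $\chi_{L_n}(x)=(x+1)(x^{n-1}-1)$ divides $x^{n-1}-1$ (here using that $x+1$ already divides $x^{n-1}-1$ because $n-1$ is odd, so $(-1)^{n-1}=-1=1$ in $\mathbb F_2$ giving $x=1$ as a root, i.e.\ $x+1\mid x^{n-1}-1$), the order of any such factor divides $n-1$, forcing $r\mid n-1$. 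I expect the main obstacle to be reconciling the precise definition of the shift in Definition~\ref{Def2} with the order needed to make $r\mid 2n-2$ come out correctly, and then cleanly justifying the "even $n$" improvement; the cleanest route for the latter is the characteristic-polynomial divisibility argument via Proposition~\ref{Prop1} rather than the combinatorics of the shift, since for odd $n-1$ the factor $x+1$ collapses into $x^{n-1}-1$ and removes the extra factor of $2$.
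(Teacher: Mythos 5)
Your overall strategy---transport the question through Fact~\ref{Thm2} and bound periods by the order of the ambient coordinate permutation $\sigma_{2n-2}$---is exactly the right one, but both halves of your execution have real problems. First, you misread Definition~\ref{Def2}: $\sigma_{2n-2}$ sends position $k$ to the entry originally in position $k+n \pmod{2n-2}$, so it is a cyclic shift by $n$ positions, not by $n-1$. The relevant quantity is therefore $\gcd(n,2n-2)=\gcd(n,n-2)=\gcd(2,n)$, so $\sigma_{2n-2}$ has order $2n-2$ when $n$ is odd and order $n-1$ when $n$ is even. This single computation proves \emph{both} assertions of the corollary at once: every period of the restriction $\overline\sigma_{2n-2}$ divides the order of $\sigma_{2n-2}$, which divides $2n-2$ in general and equals $n-1$ for even $n$. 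Your own (incorrect) computation with shift $n-1$ gave order $2$, which you rightly flagged as absurd, but your repair---positing that $\sigma_{2n-2}$ is a single $(2n-2)$-cycle---is another unverified guess, and it is in fact false for even $n$ (there the permutation splits into two $(n-1)$-cycles). A proof cannot rest on "the definition must really mean whatever makes the statement come out."

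Second, your fallback argument for the even-$n$ refinement via $\chi_{L_n}(x)=(x+1)(x^{n-1}-1)$ has a genuine gap. Knowing that every \emph{irreducible} factor of $\chi_{L_n}(x)$ divides $x^{n-1}-1$ does not give $m_{L_n}(x)\mid x^{n-1}-1$: the factor $x+1$ occurs with multiplicity $2$ in $\chi_{L_n}(x)$ (since $x+1\mid x^{n-1}-1$, as you note), so a priori $(x+1)^2$ could divide $m_{L_n}(x)$, which would produce a vector of period exactly $2$, and $2\nmid n-1$ for even $n$. Ruling out $(x+1)^2\mid m_{L_n}(x)$ is precisely what the paper uses Corollary~\ref{Cor1} \emph{for} in the proof of Lemma~\ref{Lem2}; deriving the corollary from the invariant-factor structure is therefore circular as you have set it up. The fix is simply to drop this second argument entirely and let the corrected order computation for $\sigma_{2n-2}$ carry both claims.
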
 

\begin{lemma}\label{Lem2}
The invariant factor decomposition of $L_n$ is $x+1,x^{n-1}-1$ if $n$ is even and \\ $(x+1)(x^{n-1}-1)$ if $n$ is odd. In particular, $m_{L_n}(x)=\chi_{L_n}(x)$ if $n$ is odd. 
\end{lemma}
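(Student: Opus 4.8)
The plan is to determine the minimal polynomial $m_{L_n}(x)$ and then read off the invariant factors from the three constraints $m_{L_n}(x)=\alpha_t(x)$, $\prod_i\alpha_i(x)=\chi_{L_n}(x)$, and $\alpha_1(x)\mid\cdots\mid\alpha_t(x)$. We already know $\chi_{L_n}(x)=(x+1)(x^{n-1}-1)$ from \eqref{Eq2}, that $m_{L_n}(x)\mid\chi_{L_n}(x)$, and that $m_{L_n}(x)$ and $\chi_{L_n}(x)$ share the same irreducible factors. The two elementary facts I would isolate first are: (i) over $\mathbb F_2$, $x+1$ divides $x^{n-1}-1$ (since $1$ is a root), so the distinct irreducible factors of $\chi_{L_n}$ are exactly those of $x^{n-1}-1$; and (ii) over $\mathbb F_2$, $x^m-1$ is separable precisely when $m$ is odd, while $x^{2m}-1=(x^m-1)^2$. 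Together these pin down the radical of $\chi_{L_n}$ in each parity.

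For $n$ even I would argue as follows. Since $n-1$ is odd, (i) and (ii) show that $x^{n-1}-1$ is separable and equal to the radical of $\chi_{L_n}$; hence $x^{n-1}-1\mid m_{L_n}$. On the other hand, Corollary \ref{Cor1} says every period of $L_n$ divides $n-1$, and because $L_n$ is a bijection of the finite set $\mathbb F_2^n$ this forces $L_n^{\,n-1}=\mathrm{id}$, i.e.\ $m_{L_n}\mid x^{n-1}-1$. Combining the two divisibilities gives $m_{L_n}=x^{n-1}-1$. Since $\prod_i\alpha_i=\chi_{L_n}=(x+1)(x^{n-1}-1)$ and $\alpha_t=m_{L_n}=x^{n-1}-1$, the remaining factor must be $x+1$, which indeed divides $x^{n-1}-1$; so the decomposition is $x+1,\,x^{n-1}-1$.

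The genuinely harder case, and the main obstacle, is $n$ odd, where I must show $L_n$ is \emph{non-derogatory}, i.e.\ $m_{L_n}=\chi_{L_n}$ (equivalently, there is a single invariant factor). Here Corollary \ref{Cor1} only yields $m_{L_n}\mid x^{2n-2}-1=(x^{n-1}-1)^2$, which together with the radical constraint still leaves several candidate minimal polynomials, so a different idea is needed. I would exploit Fact \ref{Thm2}: $L_n$ is conjugate to $\overline\sigma_{2n-2}$, so it suffices to analyze the shift. Regarding $\mathbb F_2^{2n-2}$ as an $\mathbb F_2[x]$-module with $x$ acting by $\sigma_{2n-2}$, I would first compute the cycle structure of $\sigma_{2n-2}$: it shifts the coordinate index by $n$ modulo $2n-2$, and since $\gcd(n,2n-2)=\gcd(n,2)=1$ for $n$ odd, it is a single $(2n-2)$-cycle. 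Consequently $\mathbb F_2^{2n-2}\cong\mathbb F_2[x]/(x^{2n-2}-1)$ is a \emph{cyclic} $\mathbb F_2[x]$-module. The subspace $\widehat{\mathbb F}_2^{2n-2}$ is $\sigma_{2n-2}$-invariant, hence a submodule, and submodules of a cyclic module over the PID $\mathbb F_2[x]$ are themselves cyclic. Therefore $\overline\sigma_{2n-2}$, and with it $L_n$, has a single invariant factor, which forces $m_{L_n}=\chi_{L_n}=(x+1)(x^{n-1}-1)$. This simultaneously establishes the claimed decomposition and the ``in particular'' statement. If one prefers to avoid Corollary \ref{Cor1}, the even case can be treated by the same shift analysis: for $n$ even, $\sigma_{2n-2}$ splits into two $(n-1)$-cycles, so the ambient module is $\bigl(\mathbb F_2[x]/(x^{n-1}-1)\bigr)^2$ with minimal polynomial $x^{n-1}-1$, and the restriction inherits $m_{L_n}\mid x^{n-1}-1$, which combined with the radical constraint again gives $m_{L_n}=x^{n-1}-1$.
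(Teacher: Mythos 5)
Your argument is correct, and for the even case it is essentially the paper's proof: separability of $x^{n-1}-1$ gives $x^{n-1}-1\mid m_{L_n}(x)$, and Corollary \ref{Cor1} gives the reverse divisibility. For the odd case, however, you take a genuinely different and arguably cleaner route. The paper proves non-derogatority by exhibiting an explicit cyclic vector: it takes $z=(1,\ldots,1,0)$, notes $\iota_n(z)=w_0$ is a cyclic block of $n-1$ ones, uses $\gcd(n,2n-2)=1$ to see that all shifts $w_0,\ldots,w_{n-1}$ lie in the forward orbit, and checks that these $n$ vectors are linearly independent, hence span the $n$-dimensional space $\widehat{\mathbb F}_2^{2n-2}$; this forces $\deg m_{L_n}\geq n$. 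You instead observe that the \emph{ambient} shift $\sigma_{2n-2}$ is a single $(2n-2)$-cycle on coordinates when $n$ is odd, so $\mathbb F_2^{2n-2}$ is the cyclic module $\mathbb F_2[x]/(x^{2n-2}-1)$, and then invoke the fact that a submodule of a cyclic module over a PID is cyclic to conclude that the invariant submodule $\widehat{\mathbb F}_2^{2n-2}$ (and hence $L_n$, via Fact \ref{Thm2}) has a single invariant factor. This buys you two things: you avoid having to guess a cyclic vector and verify linear independence of its orbit, and the same cycle-structure computation ($\gcd(n,2n-2)=\gcd(2,n)$) handles the even case uniformly, making Corollary \ref{Cor1} optional. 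The trade-off is reliance on the structure theory of modules over a PID rather than a concrete, self-contained computation. Both proofs are complete and correct.
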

\begin{proof}
First, assume $n$ is even. The derivative of $x^{n-1}-1$ is $(n-1)x^{n-2}=x^{n-2}$, which is relatively prime to $x^{n-1}-1$. It follows that $x^{n-1}-1$ is a separable polynomial that divides $\chi_{L_n}(x)$, so $x^{n-1}-1$ divides $m_{L_n}(x)$. Corollary \ref{Cor1} tells us that $L_n$ satisfies the polynomial $x^{n-1}-1$, so $m_{L_n}(x)=x^{n-1}-1$. Because $\chi_{L_n}(x)=(x+1)(x^{n-1}-1)$, the invariant factor decomposition of $L_n$ is $x+1,x^{n-1}-1$. 

Next, assume $n$ is odd. For each $j\in\{0,1,\ldots,n-1\}$, let $w_j$ be the $(2n-2)$-tuple of $0$'s and $1$'s whose $i^{\text{th}}$ entry is $1$ if and only if $j+1\leq i\leq j+n-1$. Let $z$ be the $n$-tuple whose $n^\text{th}$ entry is $0$ and whose other entries are all $1$'s. Observe that $w_0=\iota_n(z)$. Since $n$ is relatively prime to $2n-2$, the forward orbit of $w_0$ under $\overline\sigma_{2n-2}$ contains all $2n-2$ cyclic shifts of the tuple $w_0$. In particular, $w_0,w_1,\ldots,w_{n-1}$ are all in the forward orbit of $w_0$ under $\overline{\sigma}_{2n-2}$. As a consequence, $w_0,w_1,\ldots,w_{n-1}\in\widehat{\mathbb F}_2^{2n-2}$. The vectors $w_0,w_1,\ldots,w_{n-1}$ are linearly independent, so they form a basis for the $n$-dimensional space $\widehat{\mathbb F}_2^{2n-2}$. This shows that the forward orbit of $w_0$ under $\overline{\sigma}_{2n-2}$ generates the vector space $\widehat{\mathbb F}_2^{2n-2}$, so it follows from Fact \ref{Thm2} that the forward orbit of $z$ under $L_n$ generates $\mathbb F_2^n$. This means that $L_n$ cannot satisfy a polynomial in $\mathbb F_2[x]$ of degree less than $n$, so the degree of $m_{L_n}(x)$ is at least $n$. Thus, $m_{L_n(x)}=\chi_{L_n}(x)=(x+1)(x^{n-1}-1)$.   
\end{proof}

If $k$ and $m$ are nonzero integers, we write $k\mid_o m$ if $m/k$ is an odd integer. Recall that we fixed an SIAECA with an SDS map $T$ at the beginning of this section. When calculating $\vert\text{Per}_1(T^r)\vert$ for positive integers $r$, we must pay special attention to the case in which $r\mid_o 2n-2$. The following lemma will allow us to use Proposition \ref{Prop1} to understand this case.  

\begin{lemma}\label{Lem3}
Let $r$ be a positive integer such that $r\mid_o 2n-2$. In $\mathbb F_2[x]$, we have 
\begin{enumerate}[(i)]
\item $\gcd(x^r-1,x^{n-1}-1)=x^{r/2}-1$ if $n$ is even;
\item $\gcd(x^r-1,(x+1)(x^{n-1}-1))=(x+1)(x^{r/2}-1)$;
\item $\gcd(x^r-1,(x+1)^2(x^{n-1}-1))=(x+1)^2(x^{r/2}-1)$ if $n$ is odd. 
\end{enumerate}
\end{lemma}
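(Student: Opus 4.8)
The plan is to compute each gcd by comparing, for every monic irreducible factor, its multiplicity on the two sides, using the basic fact that in the UFD $\mathbb F_2[x]$ the multiplicity of an irreducible $p$ in $\gcd(f,g)$ is the minimum of its multiplicities in $f$ and $g$, and that a monic polynomial is determined by these multiplicities. Two preliminary facts do most of the work. First, the classical identity $\gcd(x^a-1,x^b-1)=x^{\gcd(a,b)}-1$, valid over any field by mimicking the Euclidean algorithm. Second, writing $m=2^k m'$ with $m'$ odd, the Frobenius endomorphism gives $x^m-1=(x^{m'}-1)^{2^k}$ in $\mathbb F_2[x]$, and $x^{m'}-1$ is separable (its derivative is $x^{m'-1}$, coprime to it); consequently the multiplicity of the factor $x+1$ in $x^m-1$ equals $2^k$, that is, $2$ raised to the $2$-adic valuation of $m$.

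First I would record the arithmetic forced by the hypothesis $r\mid_o 2n-2$. Writing $2n-2=rq$ with $q$ odd and letting $2^a$ denote the largest power of $2$ dividing $n-1$, the largest power of $2$ dividing $2n-2=2(n-1)$ is $2^{a+1}$; since $q$ is odd, the largest power of $2$ dividing $r$ is also $2^{a+1}$. Hence $r$ is even, $r/2$ is an integer, and the largest power of $2$ dividing $r/2$ is $2^a$. In particular, by the second preliminary fact, the multiplicity of $x+1$ is $2^{a+1}$ in $x^r-1$, is $2^a$ in $x^{n-1}-1$, and is $2^a$ in $x^{r/2}-1$.

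For part (i) I would apply the first preliminary fact directly, reducing the claim to the purely numerical identity $\gcd(r,n-1)=r/2$, which I would check prime by prime: at the prime $2$ both sides have valuation $a$, and at each odd prime $\ell$ the relation $r\mid 2(n-1)$ forces $v_\ell(r)\le v_\ell(n-1)$, so $\min(v_\ell(r),v_\ell(n-1))=v_\ell(r)=v_\ell(r/2)$. Parts (ii) and (iii) I would then reduce to part (i) by isolating the factor $x+1$. For any irreducible $p\neq x+1$, multiplying $x^{n-1}-1$ by a power of $x+1$ changes no multiplicity, so the contribution to each gcd from irreducibles other than $x+1$ agrees with that of $\gcd(x^r-1,x^{n-1}-1)=x^{r/2}-1$, matching the claimed right-hand side away from $x+1$. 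It thus remains only to match the multiplicity of $x+1$. Writing $j\in\{1,2\}$ for the exponent of the extra factor, this multiplicity in the gcd is $\min(2^{a+1},\,j+2^a)$, while in $(x+1)^j(x^{r/2}-1)$ it is $j+2^a$; these agree exactly when $j+2^a\le 2^{a+1}$, i.e. when $j\le 2^a$.

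The only genuinely delicate point is this last inequality, which is precisely where the parity of $n$ enters. For $j=1$ it holds for every $a\ge 0$, yielding part (ii) for all $n$; for $j=2$ it holds if and only if $a\ge 1$, i.e. if and only if $n-1$ is even, which is exactly the hypothesis that $n$ is odd in part (iii) (and indeed for $n$ even one gets $\min(2,3)=2\neq 3$, so the stated equality genuinely fails there). Everything else is bookkeeping once the two preliminary facts and the computation of the $2$-adic valuation of $r$ are in hand.
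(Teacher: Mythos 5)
Your proof is correct, but it takes a genuinely different route from the paper's. The paper proves part (ii) first, by an explicit Euclidean-algorithm step: it sets $q_1(x)=(x^{r/2}-1)/(x+1)$ and $q_2(x)=(x^{n-1}-1)/(x^{r/2}-1)$ and exhibits a single polynomial division showing $\gcd(q_1,q_2)=1$ (whence $\gcd(x^r-1,(x+1)(x^{n-1}-1))=(x+1)(x^{r/2}-1)\gcd(q_1,q_2)$); it then squeezes (i) and (iii) out of (ii), using separability of $x^{n-1}-1$ when $n$ is even and the observation that $4\mid r$ (so $(x+1)^2\mid x^{r/2}-1$) when $n$ is odd. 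You instead argue locally at each monic irreducible: the identity $\gcd(x^a-1,x^b-1)=x^{\gcd(a,b)}-1$ together with the numerical computation $\gcd(r,n-1)=r/2$ gives part (i), and the Frobenius factorization $x^m-1=(x^{m'}-1)^{2^{v_2(m)}}$ isolates the multiplicity of $x+1$, reducing (ii) and (iii) to the inequality $j\le 2^{v_2(n-1)}$ for $j=1,2$. Your version buys some extra insight: it shows (i) and (ii) require no parity hypothesis on $n$, makes completely transparent where the hypothesis ``$n$ odd'' enters in (iii), and even identifies the exact failure ($\min(2,3)=2\neq 3$) when $n$ is even; the cost is that you must invoke (or reprove) the cyclotomic gcd identity and the valuation formalism, whereas the paper's argument is a single self-contained division. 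All the steps you rely on --- the $2$-adic valuation bookkeeping, the separability of $x^{m'}-1$ for $m'$ odd, and the prime-by-prime verification of $\gcd(r,n-1)=r/2$ --- check out.
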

\begin{proof}
Let $q_1(x)=\dfrac{x^r-1}{(x+1)(x^{r/2}-1)}=\dfrac{x^{r/2}-1}{x+1}$ and $q_2(x)=\dfrac{(x+1)(x^{n-1}-1)}{(x+1)(x^{r/2}-1)}=\dfrac{x^{n-1}-1}{x^{r/2}-1}$. Note that $q_1(x)\in\mathbb F_2[x]$. Moreover, $q_2(x)\in\mathbb F_2[x]$ because $r/2$ divides $n-1$. In order to prove $(ii)$, it suffices to show that $q_1(x)$ and $q_2(x)$ are relatively prime. Because $r\mid_o 2n-2$, we may write $n-1=rs/2$ for some odd integer $s$. We have \[q_2(x)=1+x^{r/2}+(x^{r/2})^2+\cdots+(x^{r/2})^{s-1}=(1+x^{r/2})(1+x^r+x^{2r}+\cdots+x^{((s-3)/2)r})+(x^{r/2})^{s-1}\] \[=q_1(x)(x+1)(1+x^r+x^{2r}+\cdots+x^{((s-3)/2)r})+(x^{r/2})^{s-1}.\] It follows that the greatest common divisor of $q_1(x)$ and $q_2(x)$ divides $(x^{r/2})^{s-1}$, so it must be $1$. 

From $(ii)$, we deduce that $\gcd(x^r-1,x^{n-1}-1)$ is either $x^{r/2}-1$ or $(x+1)(x^{r/2}-1)$. Suppose $n$ is even. The polynomial $x^{n-1}-1$ has no repeated factors because it is relatively prime to its derivative. Because $(x+1)^2$ divides $(x+1)(x^{r/2}-1)$, it follows that $(x+1)(x^{r/2}-1)$ cannot be the greatest common divisor of $x^r-1$ and $x^{n-1}-1$. This proves $(i)$. 

Suppose $n$ is odd. Using $(ii)$ again, we see that $\gcd(x^r-1,(x+1)^2(x^{n-1}-1))$ is either \\$(x+1)(x^{r/2}-1)$ or $(x+1)^2(x^{r/2}-1)$. To prove $(iii)$, it suffices to show that $x^r-1$ is divisible by $(x+1)^2(x^{r/2}-1)$. Equivalently, we must show that $(x+1)^2$ divides $x^{r/2}-1$. Because $n$ is odd and $r\mid_o 2n-2$, $r$ is a multiple of $4$. This shows that $r/2$ is even, so $x^{r/2}-1$ is divisible by $x^2-1=(x+1)^2$.   
\end{proof}

We are now in a position to compute $\vert\text{Per}_1(T^r)\vert$. By Proposition \ref{Prop2}, $m_{\widetilde T}(x)$ divides $(x+1)m_{L_n}(x)$. If $n$ is even, then it follows from Lemma \ref{Lem2} that $m_{\widetilde T}(x)$ divides $(x+1)(x^{n-1}-1)$, which in turn divides $x^{2n-2}-1$. Similarly, if $n$ is odd, then $m_{\widetilde T}(x)$ divides $(x+1)^2(x^{n-1}-1)$, which divides $x^{2n-2}-1$. In either case, $m_{\widetilde T}(x)$ divides $x^{2n-2}-1$. This shows that $\text{Per}_1(\widetilde T^{2n-2})=\mathbb F_2^{n+1}$. According to \eqref{Eq4}, $\text{Per}_1(T^{2n-2})=\mathbb F_2^n$. We find that $\text{Per}_1(T^r)=\text{Per}_1(T^{\gcd(r,2n-2)})$ for all positive integers $r$. Thus, when computing $\vert\text{Per}_1(T^r)\vert$, it suffices to consider the case in which $r\mid 2n-2$. 

\begin{lemma}\label{Lem4}
Let $r$ be a positive integer such that $r\mid 2n-2$. Let $T=[C_n,\{f_{v_i}\}_{i=1}^n,\text{\emph{id}}]$, where $f_{v_i}\in\{\text{\emph{parity}}_3,(1+\text{\emph{parity}})_3\}$ for all $i$. If $T$ has a fixed point, then \[\vert\text{\emph{Per}}_1(T^r)\vert=\begin{cases} 2^{r+1}, & \mbox{if } r\mid n-1\text{ and }n\equiv 0\pmod 2; \\ 2^r, & \mbox{if } r\mid n-1\text{ and }n\equiv 1\pmod 2; \\ 2^{\frac r2+1}, & \mbox{if } r\nmid n-1. \end{cases}\] If $T$ does not have a fixed point, then \[\vert\text{\emph{Per}}_1(T^r)\vert=\begin{cases} 0, & \mbox{if } r\mid n-1; \\ 2^{\frac r2+1}, & \mbox{if } r\nmid n-1. \end{cases}\] 
\end{lemma}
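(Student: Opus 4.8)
The plan is to reduce everything to the invariant-factor data supplied by Lemmas \ref{Lem2} and \ref{Lem3} together with Propositions \ref{Prop1} and \ref{Prop2}. First I would record two purely arithmetic observations valid whenever $r\mid 2n-2$. If $r\mid n-1$, then $x^r-1$ divides $x^{n-1}-1$, so over any field $\gcd(x^r-1,x^{n-1}-1)=x^r-1$, and more generally $\gcd(x^r-1,g(x))=x^r-1$ for any multiple $g(x)$ of $x^{n-1}-1$. If instead $r\nmid n-1$, then writing $n-1=2^am$ with $m$ odd forces $r$ to have $2$-adic valuation exactly $a+1$; hence $(2n-2)/r$ is odd, that is $r\mid_o 2n-2$, and $\gcd(r,n-1)=r/2$, so that $r/2\mid n-1$ and Lemma \ref{Lem3} becomes applicable with its $r/2$ matching the one coming from $\gcd(r,n-1)$. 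Throughout I use the standard facts, valid over $\mathbb F_2$, that $\gcd(x^a-1,x^b-1)=x^{\gcd(a,b)}-1$ and that $\gcd(x^r-1,x+1)=x+1$.

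For the case in which $T$ has a fixed point, Proposition \ref{Prop2} shows that $T$ and $L_n$ are cyclically equivalent (indeed dynamically equivalent via $z\mapsto z+a$), so $\vert\text{Per}_1(T^r)\vert=\vert\text{Per}_1(L_n^r)\vert=2^{d}$, where $d=\dim\text{Per}_1(L_n^r)$ is computed by Proposition \ref{Prop1} from the invariant factors in Lemma \ref{Lem2}. For $n$ even these are $x+1,\,x^{n-1}-1$, giving $d=1+\deg\gcd(x^r-1,x^{n-1}-1)$, which equals $r+1$ when $r\mid n-1$ and $r/2+1$ when $r\nmid n-1$ (using Lemma \ref{Lem3}(i)). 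For $n$ odd the single invariant factor is $(x+1)(x^{n-1}-1)$, giving $d=\deg\gcd(x^r-1,(x+1)(x^{n-1}-1))$, which equals $r$ when $r\mid n-1$ (first arithmetic observation) and $r/2+1$ when $r\nmid n-1$ (Lemma \ref{Lem3}(ii)). These four values reproduce the first displayed formula.

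For the case in which $T$ has no fixed point, I would use \eqref{Eq4} in the form $\vert\text{Per}_1(T^r)\vert=2^{d'}-2^{d}$, where $d'=\dim\text{Per}_1(\widetilde T^r)$ and $d=\dim\text{Per}_1(L_n^r)$, both via Proposition \ref{Prop1}. Proposition \ref{Prop2} tells us the invariant factors of $\widetilde T$ are those of $L_n$ with exactly one $\alpha_j$ replaced by $(x+1)\alpha_j$, so I must pin down $j$. For $n$ odd there is only one invariant factor, so $\widetilde T$ has the single invariant factor $(x+1)^2(x^{n-1}-1)$. For $n$ even, the divisibility chain $\beta_1\mid\beta_2$ together with the separability of $x^{n-1}-1$ rules out $j=1$ (which would require $(x+1)^2\mid x^{n-1}-1$), forcing $j=2$ and the invariant factors $x+1,\,(x+1)(x^{n-1}-1)$ for $\widetilde T$. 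In every case $d'-d=\deg\gcd(x^r-1,(x+1)g(x))-\deg\gcd(x^r-1,g(x))$ for the relevant $g$, which is $0$ when $r\mid n-1$ (first observation) and $1$ when $r\nmid n-1$ (comparing Lemma \ref{Lem3}(i) with (ii), respectively (ii) with (iii)). Hence $\vert\text{Per}_1(T^r)\vert=2^{d}-2^{d}=0$ when $r\mid n-1$, and $\vert\text{Per}_1(T^r)\vert=2^{d+1}-2^{d}=2^{d}=2^{r/2+1}$ when $r\nmid n-1$, since $d=r/2+1$ there. This matches the second displayed formula.

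I expect the genuinely delicate step to be the no-fixed-point analysis: first identifying which invariant factor absorbs the extra factor of $x+1$ (the separability and divisibility-chain argument that pins down $j$), and then converting the difference of dimensions into the difference of cardinalities $2^{d'}-2^{d}$, where the cancellation collapses a two-term expression into a single power of $2$. Everything else is bookkeeping once the dichotomy $r\mid n-1$ versus $r\mid_o 2n-2$ is in place and Lemma \ref{Lem3} is invoked; the only remaining care is to confirm that the $r/2$ produced by $\gcd(r,n-1)$ is the same $r/2$ that appears in the statement of Lemma \ref{Lem3}.
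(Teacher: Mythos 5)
Your proposal is correct and takes essentially the same route as the paper's own proof: cyclical equivalence of $T$ and $L_n$ via Proposition \ref{Prop2} in the fixed-point case, and in the fixed-point-free case the same identification of the invariant factors of $\widetilde T$ (ruling out $(x+1)^2,\,x^{n-1}-1$ by separability when $n$ is even), followed by the dimension counts from Proposition \ref{Prop1}, Lemma \ref{Lem3}, and the subtraction in \eqref{Eq4}. The only cosmetic difference is that you make the arithmetic fact $r\nmid n-1\Leftrightarrow r\mid_o 2n-2$ (for $r\mid 2n-2$) explicit up front, which the paper states in passing.
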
 
\begin{proof}
Suppose first that $T$ has a fixed point. We know by Proposition \ref{Prop2} that $T$ and $L_n$ are cyclically equivalent, so $\vert\text{Per}_1(T^r)\vert=\vert\text{Per}_1(L_n^r)\vert$. Recall from Lemma \ref{Lem2} that the invariant factor decomposition of $L_n$ is $x+1,x^{n-1}-1$ if $n$ is even and $(x+1)(x^{n-1}-1)$ if $n$ is odd. If $r\mid n-1$ and $n$ is even, Proposition \ref{Prop1} tells us that $\text{Per}_1(L_n^r)$ is a vector subspace of $\mathbb F_2^n$ of dimension \[\deg(\gcd(x^r-1,x+1))+\deg(\gcd(x^r-1,x^{n-1}-1))=\deg(x+1)+\deg(x^r-1)=r+1.\] Similarly, if $r\mid n-1$ and $n$ is odd, then \[\dim (\text{Per}_1(L_n^r))=\deg(\gcd(x^r-1,(x+1)(x^{n-1}-1)))=\deg(x^r-1)=r.\] Now suppose $r\nmid n-1$. Equivalently, $r\mid_o 2n-2$. If $n$ is even, then we know from Lemma \ref{Lem3} that the dimension of $\text{Per}_1(L_n^r)$ is \[\deg(\gcd(x^r-1,x+1))+\deg(\gcd(x^r-1,x^{n-1}-1))=\deg(x+1)+\deg(x^{r/2}-1)=\frac r2+1.\] If $n$ is odd, then we may use Lemma \ref{Lem3} once again to find that \[\dim (\text{Per}_1(L_n^r))=\deg(\gcd(x^r-1,(x+1)(x^{n-1}-1)))=\deg((x+1)(x^{r/2}-1))=\frac r2+1.\] 

Now assume that $T$ does not have a fixed point and that $n$ is even. According to Proposition \ref{Prop2}, the invariant factor decomposition of $\widetilde T$ is either $(x+1)^2,x^{n-1}-1$ or $x+1,(x+1)(x^{n-1}-1)$. The first case is impossible since $(x+1)^2$ does not divide $x^{n-1}-1$ ($x^{n-1}-1$ has no repeated factors because $n$ is even). It follows that the invariant factor decomposition of $\widetilde T$ is $x+1,(x+1)(x^{n-1}-1)$, so we may invoke Proposition \ref{Prop1} to find that $\text{Per}_1(\widetilde T^r)$ is a vector subspace of $\mathbb F_2^{n+1}$ of dimension \[\deg(\gcd(x^r-1,x+1))+\deg(\gcd(x^r-1,(x+1)(x^{n-1}-1)))=1+\deg(\gcd(x^r-1,(x+1)(x^{n-1}-1))).\] If $r\mid n-1$, then $\gcd(x^r-1,(x+1)(x^{n-1}-1))=x^r-1$. In this case, we know from \eqref{Eq4} and our calculations above that \[\vert\text{Per}_1(T^r)\vert=\vert\text{Per}_1(\widetilde T^r)\vert-\vert\text{Per}_1(L_n^r)\vert=2^{r+1}-2^{r+1}=0.\] Suppose $r\nmid n-1$. Then $\gcd(x^r-1,(x+1)(x^{n-1}-1))=(x+1)(x^{r/2}-1)$ by Lemma \ref{Lem3}, so $\text{Per}_1(\widetilde T^r)$ has dimension $\frac r2+2$. We saw above that $\text{Per}_1(L_n^r)$ is a vector subspace of $\mathbb F_2^n$ of dimension $\frac r2+1$. By \eqref{Eq4}, \[\vert\text{Per}_1(T^r)\vert=\vert\text{Per}_1(\widetilde T^r)\vert-\vert\text{Per}_1(L_n^r)\vert=2^{\frac r2+2}-2^{\frac r2+1}=2^{\frac r2+1}.\]

Finally, assume $T$ has no fixed points and $n$ is odd. Proposition \ref{Prop2} tells us that the invariant factor decomposition of $\widetilde T$ is $(x+1)^2(x^{n-1}-1)$ (so $m_{\widetilde T}(x)=\chi_{\widetilde T}(x)$). By Proposition \ref{Prop1}, $\text{Per}_1(\widetilde T^r)$ is a subspace of $\mathbb F_2^{n+1}$ of dimension $\deg(\gcd(x^r-1,(x+1)^2(x^{n-1}-1)))$. If $r\mid n-1$, then \\$\gcd(x^r-1,(x+1)^2(x^{n-1}-1))=x^r-1$. In this case, we also know from above that $\text{Per}_1(L_n^r)$ is a vector space over $\mathbb F_2$ of dimension $r$. Therefore, \[\vert\text{Per}_1(T^r)\vert=\vert\text{Per}_1(\widetilde T^r)\vert-\vert\text{Per}_1(L_n^r)\vert=2^r-2^r=0.\] Next, suppose $r\nmid n-1$. By Lemma \ref{Lem3}, $\gcd(x^r-1,(x+1)^2(x^{n-1}-1))=(x+1)^2(x^{r/2}-1)$. We saw above that $\text{Per}_1(L_n^r)$ has dimension $\frac r2+1$ in this case, so \[\vert\text{Per}_1(T^r)\vert=\vert\text{Per}_1(\widetilde T^r)\vert-\vert\text{Per}_1(L_n^r)\vert=2^{\frac r2+2}-2^{\frac r2+1}=2^{\frac r2+1}. \qedhere\] 
\end{proof}

In what follows, let $\mu$ denote the number-theoretic M\"obius function. Recall that for positive integers $k$ and $m$, we write $k\mid_o m$ to mean that $m/k$ is an odd integer. In addition, let \[\xi_m(k)=\begin{cases} 2^k, & \mbox{if } k\mid m-1; \\ 2^{\frac k2+1}, & \mbox{if } k\nmid m-1. \end{cases}\]
\begin{theorem} \label{Thm3} 
Let $r$ be a positive integer. Let $T=[C_n,\{f_{v_i}\}_{i=1}^n,\text{\emph{id}}]$, where \\$f_{v_i}\in\{\text{\emph{parity}}_3,(1+\text{\emph{parity}})_3\}$ for all $i$. If $T$ has a fixed point, then \[\vert\text{\emph{Per}}_r(T)\vert=\begin{cases} \sum\limits_{d\mid r}\mu(r/d)2^{d+1}, & \mbox{if } r\mid n-1\text{ and }n\equiv 0\!\pmod 2; \\ \sum\limits_{d\mid r}\mu(r/d)\xi_n(d), & \mbox{if } r\mid 2n-2\text{ and }n\equiv 1\!\pmod 2; \\ 0, & \mbox{otherwise}. \end{cases}\]
If $T$ does not have a fixed point, then \[\vert\text{\emph{Per}}_r(T)\vert=\begin{cases} \sum\limits_{d\mid_or}\mu(r/d)2^{\frac d2+1}, & \mbox{if } r\mid_o2n-2; \\ 0, & \mbox{otherwise}. \end{cases}\]
\end{theorem}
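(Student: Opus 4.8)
The plan is to feed Lemma \ref{Lem4} into the Möbius inversion formula \eqref{Eq3}, after which only elementary divisor-counting remains. The crucial preliminary, already recorded just before Lemma \ref{Lem4}, is that $\text{Per}_1(T^{2n-2})=\mathbb F_2^n$; equivalently $T^{2n-2}$ is the identity map, so every period of $T$ divides $2n-2$. Hence $\vert\text{Per}_r(T)\vert=0$ whenever $r\nmid 2n-2$, and it suffices to treat the case $r\mid 2n-2$. In that case every divisor $d$ of $r$ also divides $2n-2$, so Lemma \ref{Lem4} supplies $\vert\text{Per}_1(T^d)\vert$ directly, and \eqref{Eq3} reads $\vert\text{Per}_r(T)\vert=\sum_{d\mid r}\mu(r/d)\,\vert\text{Per}_1(T^d)\vert$.

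I would dispatch the fixed-point case first, since there $T$ and $L_n$ are cyclically equivalent by Proposition \ref{Prop2}, so Corollary \ref{Cor1} applies to $T$ verbatim. When $n$ is even, Corollary \ref{Cor1} forces every period to divide $n-1$, giving $0$ outside $r\mid n-1$; for $r\mid n-1$ each $d\mid r$ divides $n-1$, so Lemma \ref{Lem4} gives $\vert\text{Per}_1(T^d)\vert=2^{d+1}$ and \eqref{Eq3} yields the first branch. When $n$ is odd, Corollary \ref{Cor1} only restricts periods to divide $2n-2$, so the value is $0$ outside $r\mid 2n-2$; for $r\mid 2n-2$ each $d\mid r$ divides $2n-2$, and comparing the two branches of Lemma \ref{Lem4} with the definition of $\xi_n$ shows $\vert\text{Per}_1(T^d)\vert=\xi_n(d)$, which gives the second branch after substitution into \eqref{Eq3}.

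The no-fixed-point case is where the real work lies, because $T$ is no longer cyclically equivalent to $L_n$ and Corollary \ref{Cor1} cannot be invoked for $T$; the sharper vanishing range $r\mid_o 2n-2$ must instead be extracted from the shape of Lemma \ref{Lem4} by a $2$-adic bookkeeping argument. Write $2n-2=2^am$ with $m$ odd. For a divisor $d$ of $2n-2$ one has the dichotomy: either the power of $2$ dividing $d$ is strictly smaller than $2^a$, in which case $d\mid n-1$ and Lemma \ref{Lem4} gives $\vert\text{Per}_1(T^d)\vert=0$; or $d$ carries the full factor $2^a$, in which case $d\mid_o 2n-2$ and $\vert\text{Per}_1(T^d)\vert=2^{d/2+1}$. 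Consequently, in $\sum_{d\mid r}\mu(r/d)\,\vert\text{Per}_1(T^d)\vert$ only those $d$ carrying the full factor $2^a$ survive. If $r$ does not carry the full factor $2^a$ (which, given $r\mid 2n-2$, happens exactly when $r\nmid_o 2n-2$), then no such $d$ divides $r$ and the sum is $0$; if $r\mid_o 2n-2$, the surviving divisors of $r$ are precisely those $d$ with $d\mid_o r$, and the sum collapses to $\sum_{d\mid_o r}\mu(r/d)\,2^{d/2+1}$, as claimed.

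The main obstacle is thus the no-fixed-point vanishing: proving $\vert\text{Per}_r(T)\vert=0$ for those $r$ dividing $2n-2$ but not in the sense $\mid_o$. This cannot be read off from a dynamical equivalence and instead rests on the observation that $\vert\text{Per}_1(T^d)\vert$ vanishes for every $d$ whose $2$-part is deficient. A secondary point requiring care is the re-indexing of the Möbius sum from all divisors $d\mid r$ to the odd-cofactor divisors $d\mid_o r$, which is justified precisely because the remaining terms $\vert\text{Per}_1(T^d)\vert$ are supported on $\{d\mid r: d\mid_o r\}$ once $r\mid_o 2n-2$. The fixed-point branches, by contrast, are immediate once Corollary \ref{Cor1} and Lemma \ref{Lem4} are in hand.
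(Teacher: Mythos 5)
Your proposal is correct and follows the same route as the paper: establish $\mathrm{Per}_1(T^{2n-2})=\mathbb F_2^n$ to kill all $r\nmid 2n-2$, then feed Lemma \ref{Lem4} into the M\"obius inversion formula \eqref{Eq3}, using the cyclic equivalence of $T$ and $L_n$ (Proposition \ref{Prop2} together with Corollary \ref{Cor1}) to handle the vanishing in the fixed-point, $n$ even, $r\nmid n-1$ case. The paper compresses the remaining verifications into one sentence, whereas you carry out the $2$-adic bookkeeping and the re-indexing of the sum over $d\mid_o r$ explicitly; these details are accurate and are exactly what the paper leaves to the reader.
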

\begin{proof}
In the paragraph preceding Lemma \ref{Lem4}, we saw that $\text{Per}_1(T^{2n-2})=\mathbb F_2^n$. Therefore, if $r\nmid 2n-2$, then $T$ has no periodic points of period $r$. We also know that $L_n$ has no periodic points of period $r$ if $n$ is even and $r\nmid n-1$. Consequently, if $T$ has a fixed point, $n$ is even, and $r\nmid n-1$, then $\vert\text{Per}_r(T)\vert=0$ because $T$ and $L_n$ are dynamically equivalent by Proposition \ref{Prop2}. The remaining cases follow from Lemma \ref{Lem4} and the formula in \eqref{Eq3}, which tells us that \[\vert\text{Per}_r(T)\vert=\sum_{d\mid r}\mu(r/d)\vert\text{Per}_1(T^r)\vert. \qedhere\] 
\end{proof}

\begin{table}[h]
{\footnotesize
\begin{tabu}{|c|[1.5pt]c|c|c|c|c|c|c|c|c|c|c|c|p{0.25cm}|p{0.25cm}|p{0.25cm}|c|c|c|c|c|}

    \hline \backslashbox{$n$}{$r$} & 1 & 2 & 3 & 4 & 5 & 6 & 7 & 8 & 9 & 10 & 11 & 12 & 13 & 14 & 15 & 16 & 17 & 18 & 19 & 20 \\\tabucline[2pt]{-}
    3 & 2 & 2 & 0 & 4 & 0 & 0 & 0 & 0 & 0 & 0 & 0 & 0 & 0 & 0 & 0 & 0 & 0 & 0 & 0 & 0 \\\hline
    4 & 4 & 0 & 12 & 0 & 0 & 0 & 0 & 0 & 0 & 0 & 0 & 0 & 0 & 0 & 0 & 0 & 0 & 0 & 0 & 0 \\\hline
    5 & 2 & 2 & 0 & 12 & 0 & 0 & 0 & 16 & 0 & 0 & 0 & 0 & 0 & 0 & 0 & 0 & 0 & 0 & 0 & 0 \\\hline
    6 & 4 & 0 & 0 & 0 & 60 & 0 & 0 & 0 & 0 & 0 & 0 & 0 & 0 & 0 & 0 & 0 & 0 & 0 & 0 & 0 \\\hline
    7 & 2 & 2 & 6 & 4 & 0 & 64 & 0 & 0 & 0 & 0 & 0 & 60 & 0 & 0 & 0 & 0 & 0 & 0 & 0 & 0 \\\hline
    8 & 4 & 0 & 0 & 0 & 0 & 0 & 252 & 0 & 0 & 0 & 0 & 0 & 0 & 0 & 0 & 0 & 0 & 0 & 0 & 0 \\\hline
    9 & 2 & 2 & 0 & 12 & 0 & 0 & 0 & 240 & 0 & 0 & 0 & 0 & 0 & 0 & 0 & 256 & 0 & 0 & 0 & 0 \\\hline
    10 & 4 & 0 & 12 & 0 & 0 & 0 & 0 & 0 & 1008 & 0 & 0 & 0 & 0 & 0 & 0 & 0 & 0 & 0 & 0 & 0 \\\hline
    11 & 2 & 2 & 0 & 4 & 30 & 0 & 0 & 0 & 0 & 990 & 0 & 0 & 0 & 0 & 0 & 0 & 0 & 0 & 0 & 1020 \\\hline
\end{tabu}}\vspace{.3cm}
\caption{The values of $\vert\text{Per}_r(T)\vert$ for $3\leq n\leq 11$ and $1\leq r\leq 20$, where $T$ is as in Theorem \ref{Thm3} and $T$ has a fixed point.}\label{Tab1}
\end{table}

\begin{table}[h]
{\footnotesize
\begin{tabu}{|c|[1.5pt]c|c|c|c|c|c|c|c|c|c|c|c|c|c|c|c|c|c|c|c|}

    \hline \backslashbox{$n$}{$r$} & 1 & 2 & 3 & 4 & 5 & 6 & 7 & 8 & 9 & 10 & 11 & 12 & 13 & 14 & 15 & 16 & 17 & 18 & 19 & 20 \\\tabucline[2pt]{-}
    3 & 0 & 0 & 0 & 8 & 0 & 0 & 0 & 0 & 0 & 0 & 0 & 0 & 0 & 0 & 0 & 0 & 0 & 0 & 0 & 0 \\\hline
    4 & 0 & 4 & 0 & 0 & 0 & 12 & 0 & 0 & 0 & 0 & 0 & 0 & 0 & 0 & 0 & 0 & 0 & 0 & 0 & 0 \\\hline
    5 & 0 & 0 & 0 & 0 & 0 & 0 & 0 & 32 & 0 & 0 & 0 & 0 & 0 & 0 & 0 & 0 & 0 & 0 & 0 & 0 \\\hline
    6 & 0 & 4 & 0 & 0 & 0 & 0 & 0 & 0 & 0 & 60 & 0 & 0 & 0 & 0 & 0 & 0 & 0 & 0 & 0 & 0 \\\hline
    7 & 0 & 0 & 0 & 8 & 0 & 0 & 0 & 0 & 0 & 0 & 0 & 120 & 0 & 0 & 0 & 0 & 0 & 0 & 0 & 0 \\\hline
    8 & 0 & 4 & 0 & 0 & 0 & 0 & 0 & 0 & 0 & 0 & 0 & 0 & 0 & 252 & 0 & 0 & 0 & 0 & 0 & 0 \\\hline
    9 & 0 & 0 & 0 & 0 & 0 & 0 & 0 & 0 & 0 & 0 & 0 & 0 & 0 & 0 & 0 & 512 & 0 & 0 & 0 & 0 \\\hline
    10 & 0 & 4 & 0 & 0 & 0 & 12 & 0 & 0 & 0 & 0 & 0 & 0 & 0 & 0 & 0 & 0 & 0 & 1008 & 0 & 0 \\\hline
    11 & 0 & 0 & 0 & 8 & 0 & 0 & 0 & 0 & 0 & 0 & 0 & 0 & 0 & 0 & 0 & 0 & 0 & 0 & 0 & 2040 \\\hline
    \end{tabu}}\vspace{.3cm}
\caption{The values of $\vert\text{Per}_r(T)\vert$ for $3\leq n\leq 11$ and $1\leq r\leq 20$, where $T$ is as in Theorem \ref{Thm3} and $T$ does \emph{not} have a fixed point.}\label{Tab1}
\end{table}

Let us say that the triple $(T,n,r)$ is \emph{nice} if either $T$ has a fixed point and $r$ divides \\$(2n-2)/\gcd(2,n)$ or $T$ does not have a fixed point and $r\mid_o2n-2$. The preceding theorem implies that if $T$ has a periodic point of period $r$, then $(T,n,r)$ is nice. The following corollary asserts that the converse is true as well. 

\begin{corollary}\label{Cor3}
Let $T$ and $r$ be as in Theorem \ref{Thm3}. If $T$ has a fixed point and $r$ divides \\$(2n-2)/\gcd(2,n)$, then $T$ has at least $2$ periodic points of period $r$. If $T$ does not have a fixed point and $r\mid_o2n-2$, then $T$ has at least $4$ periodic points of period $r$.
\end{corollary}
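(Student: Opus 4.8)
The plan is to show that, in each branch of Theorem \ref{Thm3} corresponding to a nice triple, the M\"obius sum giving $\vert\text{Per}_r(T)\vert$ collapses to a constant multiple of a \emph{necklace polynomial value}, which is bounded below by elementary combinatorics. For integers $q\geq 2$ and $m\geq 1$, write $M_q(m)=\sum_{d\mid m}\mu(m/d)q^d$. This counts the aperiodic words of length $m$ over a $q$-letter alphabet; since it equals $m$ times the number of length-$m$ Lyndon words (and $M_q(1)=q$), one has $M_q(m)\geq 2$ for all $q\geq 2$ and $m\geq 1$. The whole corollary follows once each case is written in the form $cM_Q(m)$ with $c\in\{1,2\}$ and $Q\geq 2$.

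First I would dispose of the three straightforward branches. If $T$ has a fixed point and $n$ is even, then $r\mid n-1$ and Theorem \ref{Thm3} gives $\vert\text{Per}_r(T)\vert=\sum_{d\mid r}\mu(r/d)2^{d+1}=2M_2(r)\geq 4$. If $T$ has a fixed point, $n$ is odd, and $r\mid n-1$, then every divisor $d$ of $r$ also divides $n-1$, so $\xi_n(d)=2^d$ and $\vert\text{Per}_r(T)\vert=\sum_{d\mid r}\mu(r/d)2^d=M_2(r)\geq 2$. If $T$ has no fixed point, then $r\mid_o 2n-2$ forces $r$ to be even (otherwise $(2n-2)/r$ would be an even integer, not odd), so I may write $r=2^am$ with $m$ odd and $a\geq 1$; the divisors $d$ with $r/d$ odd are exactly $d=2^ae$ with $e\mid m$, and substituting into Theorem \ref{Thm3} gives $\vert\text{Per}_r(T)\vert=\sum_{e\mid m}\mu(m/e)2^{2^{a-1}e+1}=2M_Q(m)$ with $Q=2^{2^{a-1}}\geq 2$, hence $\vert\text{Per}_r(T)\vert\geq 4$.

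The one remaining branch, in which $T$ has a fixed point, $n$ is odd, and $r\nmid n-1$, is the main obstacle, because there $\xi_n$ takes both of its values on divisors of $r$ and the sum does not visibly collapse. Here I would carry out a $2$-adic bookkeeping using the $2$-adic valuation $v_2$. Since $r\mid 2n-2$ but $r\nmid n-1$, one has $v_2(r)=v_2(n-1)+1$ and the odd part $\rho$ of $r$ divides $n-1$; writing $\ell=v_2(n-1)$ and $r=2^{\ell+1}\rho$, a divisor $d=2^ie$ of $r$ satisfies $d\mid n-1$ precisely when $i\leq\ell$, so $\xi_n(d)=2^d$ for $i\leq\ell$ and $\xi_n(d)=2^{d/2+1}$ for $i=\ell+1$. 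The key observation is that $\mu(r/(2^ie))=\mu(2^{\ell+1-i}(\rho/e))=0$ unless $i\in\{\ell,\ell+1\}$, since $4$ divides $r/(2^ie)$ when $i\leq\ell-1$; thus only these two layers survive. The layer $i=\ell+1$ contributes $\sum_{e\mid\rho}\mu(\rho/e)2^{2^\ell e+1}=2M_R(\rho)$ with $R=2^{2^\ell}\geq 2$, while the layer $i=\ell$ contributes $\sum_{e\mid\rho}\mu(2\rho/e)2^{2^\ell e}=-M_R(\rho)$, using $\mu(2\rho/e)=-\mu(\rho/e)$ for $\rho/e$ odd. Adding the two layers yields $\vert\text{Per}_r(T)\vert=M_R(\rho)\geq 2$, completing this case. (As a check, at $n=7$ and $r=12$ this predicts $M_4(3)=-4+64=60$, matching the first table above.)

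Collecting the four bounds gives $\vert\text{Per}_r(T)\vert\geq 4$ in the no-fixed-point branch and $\vert\text{Per}_r(T)\vert\geq 2$ in each fixed-point branch, exactly as asserted. I expect the $2$-adic argument of the third paragraph to be the only delicate step; everything else is a direct substitution into Theorem \ref{Thm3}. As a fallback that sidesteps that computation, one can instead note that $\text{Per}_r(T)$ is a disjoint union of $T$-orbits of size exactly $r$, so $r\mid\vert\text{Per}_r(T)\vert$; this reduces every bound with $r\geq 2$ to the mere positivity of $\vert\text{Per}_r(T)\vert$, with the residual cases $r=1$ (fixed point) and $r=2$ (no fixed point) handled by evaluating Lemma \ref{Lem4} directly, e.g. $\vert\text{Per}_2(T)\vert=\vert\text{Per}_1(T^2)\vert-\vert\text{Per}_1(T)\vert=4-0=4$ when $T$ has no fixed point.
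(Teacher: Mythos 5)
Your proof is correct, and it takes a genuinely different route from the paper's. The paper bounds each M\"obius sum crudely: it isolates the top term ($2^{r+1}$, $\xi_n(r)$, or $2^{r/2+1}$), replaces every other summand by its negative absolute value, and telescopes the resulting geometric series down to $4$ or $2$. You instead identify each sum exactly as a (multiple of a) necklace polynomial $M_q(m)=\sum_{d\mid m}\mu(m/d)q^d$ with $q\geq 2$, whose positivity (indeed $M_q(m)\geq 2$) is free because it counts aperiodic words. The only case where this identification is not immediate is the fixed-point, $n$ odd, $r\nmid n-1$ branch, and your $2$-adic layering there is right: with $\ell=v_2(n-1)$ and $r=2^{\ell+1}\rho$, squarefreeness kills all layers except $i=\ell,\ell+1$, and the two surviving layers combine to $2M_R(\rho)-M_R(\rho)=M_R(\rho)$ with $R=2^{2^\ell}$; your check $M_4(3)=60$ against the $n=7$, $r=12$ table entry confirms it. What your approach buys is a closed combinatorial interpretation of the exact counts (each nonzero $\vert\mathrm{Per}_r(T)\vert$ is literally a count of aperiodic words over an alphabet of size a power of $2$, which also makes Conjecture \ref{Conj2} transparent for $m=2$); what the paper's approach buys is brevity and no case analysis on $v_2$. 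One caveat on your fallback paragraph: the divisibility $r\mid\vert\mathrm{Per}_r(T)\vert$ does not by itself sidestep the computation, since establishing ``mere positivity'' of the alternating sums still requires essentially the same estimate (or your necklace identification); as written the fallback is a reduction, not an independent proof. But your main argument is complete without it.
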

\begin{proof}
The proof follows from Theorem \ref{Thm3} along with the inequalities \[\sum_{d\mid r}\mu(r/d)2^{d+1}=2^{r+1}+\sum_{\substack{d\mid r\\ d<r}}\mu(r/d)2^{d+1}\geq 2^{r+1}-\sum_{\substack{d\mid r \\ d<r}}2^{d+1}\geq 2^{r+1}-\sum_{k=1}^{r-1}2^{k+1}=4,\] \[\sum_{d\mid r}\mu(r/d)\xi_n(d)=\xi_n(r)+\sum_{\substack{d\mid r\\ d<r}}\mu(r/d)\xi_n(d)\geq2^{\left\lfloor r/2\right\rfloor+1}+\sum_{\substack{d\mid r\\ d<r}}\mu(r/d)\xi_n(d)\] \[\geq 2^{\left\lfloor r/2\right\rfloor+1}-\sum_{\substack{d\mid r \\ d<r}}\xi_n(d)\geq 2^{\left\lfloor r/2\right\rfloor+1}-\sum_{k=1}^{\left\lfloor r/2\right\rfloor}\xi_n(k)\geq 2^{\left\lfloor r/2\right\rfloor+1}-\sum_{k=1}^{\left\lfloor r/2\right\rfloor}2^k\geq 2,\] and (assuming $r$ is even) \[\sum_{d\mid_or}\mu(r/d)2^{\frac d2+1}=2^{\frac r2+1}+\sum_{\substack{d\mid_or \\ d<r}}\mu(r/d)2^{\frac d2+1}\geq 2^{\frac r2+1}-\sum_{\substack{d\mid_or \\ d<r}}2^{\frac d2+1}\geq 2^{\frac r2+1}-\sum_{k=2}^{r/2}2^k=4. \qedhere\] 
\end{proof}

Observe that the formulas for $\vert\text{Per}_r(T)\vert$ given in Theorem \ref{Thm3} do not depend on $n$ except through some divisibility restrictions. Consequently, if we fix $r$ and vary $T$, then $\vert\text{Per}_r(T)\vert$ can only attain finitely many values. More precisely, we have the following corollary. 

\begin{corollary}\label{Cor2}
Fix a positive integer $r$. Let \[\kappa_r=\sum_{d\mid r}\mu(r/d)2^d,\] \[\lambda_r=\begin{cases} \sum\limits_{d\mid r}\mu(r/d)\xi_{\frac r2+1}(d), & \mbox{if } r\equiv0\pmod2; \\ 2\kappa_r, & \mbox{if } r\equiv1\pmod2, \end{cases}\] and \[\rho_r=\sum\limits_{d\mid_or}\mu(r/d)2^{\frac d2+1}.\] Let $T=[C_n,\{f_{v_i}\}_{i=1}^n,\text{\emph{id}}]$, where $f_{v_i}\in\{\text{\emph{parity}}_3,(1+\text{\emph{parity}})_3\}$ for all $i$. If $T$ has a fixed point, then $\vert\text{\emph{Per}}_r(T)\vert\in\{0,\kappa_r,\lambda_r\}$. If $T$ does not have a fixed point, then $\vert\text{\emph{Per}}_r(T)\vert\in\{0,\rho_r\}$.  
\end{corollary}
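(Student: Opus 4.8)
The plan is to read off the three possible nonzero values of $\vert\text{Per}_r(T)\vert$ directly from the case analysis in Theorem \ref{Thm3} and to verify that each of the piecewise formulas there collapses to one of $\kappa_r$, $\lambda_r$, or $\rho_r$ once $r$ is fixed. The key observation I would stress is that the formulas in Theorem \ref{Thm3} depend on $n$ only through divisibility conditions (such as $r\mid n-1$ or $r\mid_o 2n-2$) and through the parity of $n$; the actual summands $\mu(r/d)2^{d+1}$, $\mu(r/d)\xi_n(d)$, and $\mu(r/d)2^{d/2+1}$ involve $n$ only inside $\xi_n$, so I must check that even that dependence disappears after the divisibility constraints are imposed.

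First I would handle the fixed-point case with $n$ even: Theorem \ref{Thm3} gives $\sum_{d\mid r}\mu(r/d)2^{d+1}=2\sum_{d\mid r}\mu(r/d)2^d=2\kappa_r$ whenever $r\mid n-1$, and $0$ otherwise. So in the even fixed-point case the value is either $0$ or $2\kappa_r$. Next I would treat the fixed-point case with $n$ odd, where the value (when $r\mid 2n-2$) is $\sum_{d\mid r}\mu(r/d)\xi_n(d)$. Here I need to show this equals $\lambda_r$ regardless of the specific odd $n$, and this is where the real work lies. The point is that each divisor $d\mid r$ with $\mu(r/d)\neq 0$ contributes $\xi_n(d)$, which is $2^d$ if $d\mid n-1$ and $2^{d/2+1}$ if $d\nmid n-1$; I must argue that for $d\mid r$ the condition $d\mid n-1$ is equivalent to a condition not involving $n$, so that $\xi_n(d)$ can be replaced by $\xi_{r/2+1}(d)$ (when $r$ is even) or reduces to $2^d$ for all relevant $d$ (when $r$ is odd, giving $2\kappa_r$). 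Since $r\mid 2n-2$, one has $d\mid 2n-2$ for every $d\mid r$; then $d\mid n-1$ fails precisely when $d\mid_o 2n-2$, i.e.\ when $(2n-2)/d$ is odd, which forces $d$ to carry the full power of $2$ dividing $2n-2$. When $r$ is odd every $d\mid r$ is odd and hence $d\mid n-1$ automatically (from $d\mid 2n-2$ and $\gcd(d,2)=1$), giving $\xi_n(d)=2^d$ and the value $2\kappa_r$; when $r$ is even I need to match the $2$-adic valuation bookkeeping against the reference value $n=r/2+1$, which is exactly the smallest odd $n$ for which $r\mid 2n-2$ holds with $r\nmid n-1$.

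Finally, for the no-fixed-point case, Theorem \ref{Thm3} directly gives $\sum_{d\mid_o r}\mu(r/d)2^{d/2+1}=\rho_r$ when $r\mid_o 2n-2$ and $0$ otherwise, so the value lies in $\{0,\rho_r\}$ with no further simplification needed beyond recognizing that the summation index $d\mid_o r$ and the summand are manifestly independent of $n$. I expect the main obstacle to be the odd-$n$ fixed-point subcase: I must carefully verify the equivalence between the $n$-dependent condition $d\mid n-1$ and the $n$-free condition defining $\xi_{r/2+1}$, handling the parity of $r$ separately and confirming that the $2$-adic valuation argument genuinely makes $\xi_n(d)$ constant across all admissible odd $n$. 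Once that equivalence is nailed down, assembling the three candidate values $\{0,\kappa_r,\lambda_r\}$ (noting $2\kappa_r$ appears as $\lambda_r$ in the odd-$r$ branch and the even-$n$ branch) and $\{0,\rho_r\}$ is routine.
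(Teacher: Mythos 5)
Your treatment of the no-fixed-point case and of the fixed-point case with $n$ even is essentially the paper's (in the latter you should state explicitly that $r\mid n-1$ with $n$ even forces $r$ odd, so that $2\kappa_r=\lambda_r$; the paper does make this inference). The genuine gap is in the odd-$n$ fixed-point case, which you rightly identify as the heart of the matter but then organize around the wrong dichotomy. You set out to show that $\sum_{d\mid r}\mu(r/d)\xi_n(d)$ equals $\lambda_r$ ``regardless of the specific odd $n$,'' and that is false. The correct split, which is the one the paper makes, is on whether $r\mid n-1$ or $r\nmid n-1$ --- and for even $r$ both possibilities occur. If $r\mid n-1$, then every $d\mid r$ divides $n-1$, so $\xi_n(d)=2^d$ and the sum is $\kappa_r$ (note: $\kappa_r$, not $2\kappa_r$; the factor of $2$ in your odd-$r$ subcase is a slip, since the summand here is $\mu(r/d)\xi_n(d)$, not $\mu(r/d)2^{d+1}$). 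This is precisely how the value $\kappa_r$ enters the set $\{0,\kappa_r,\lambda_r\}$, yet it never appears in your derivation. Only when $r\nmid n-1$ (which forces $r$ even) does one get $\gcd(r,n-1)=r/2$, hence $d\mid n-1\iff d\mid r/2$ for divisors $d$ of $r$, hence $\xi_n(d)=\xi_{\frac r2+1}(d)$ and the value $\lambda_r$.

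Concretely, your proposed replacement $\xi_n(d)\mapsto\xi_{\frac r2+1}(d)$ fails whenever $r$ is even and $r\mid n-1$. Take $n=9$ and $r=8$: here $d=8$ divides $n-1=8$ but does not divide $r/2=4$, so $\xi_9(8)=2^8$ while $\xi_5(8)=2^5$. The true count is $\kappa_8=2^8-2^4=240$ (matching Table 1), whereas $\lambda_8=2^5-2^4=16$. The underlying issue is that the condition you isolate, $d\mid_o 2n-2$, still depends on the exact power of $2$ dividing $n-1$ and is therefore not an $n$-free condition across all admissible odd $n$; the dependence on $n$ disappears only after first conditioning on $r\mid n-1$ versus $r\nmid n-1$. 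Reorganizing the odd-$n$ case along that dichotomy repairs the argument and recovers the paper's proof.
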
 
\begin{proof}
If $T$ does not have a fixed point, the result follows immediately from Theorem \ref{Thm3}. Thus, assume $T$ has a fixed point. Suppose further that $\vert\text{Per}_r(T)\vert\neq 0$. We will show that $\vert\text{Per}_r(T)\vert\in\{\kappa_r,\lambda_r\}$. 

First, suppose $n$ is even. Because $\vert\text{Per}_r(T)\vert\neq 0$, it follows from Theorem \ref{Thm3} that $r\mid n-1$ and $\displaystyle{\vert\text{Per}_r(T)\vert=\sum_{d\mid r}\mu(r/d)2^{d+1}}$. Because $r\mid n-1$ and $n$ is even, $r$ must be odd. Therefore, \[\vert\text{Per}_r(T)\vert=\sum_{d\mid r}\mu(r/d)2^{d+1}=2\kappa_r=\lambda_r.\]

Next, assume $n$ is odd. Because $\vert\text{Per}_r(T)\vert\neq 0$, we must have $r\mid 2n-2$ and \[\vert\text{Per}_r(T)\vert=\sum_{d\mid r}\mu(r/d)\xi_n(d)\] by Theorem \ref{Thm3}. We have two cases to consider. 

\noindent 
Case 1: In this case, assume $r\mid n-1$. For each divisor $d$ of $r$, we know that $\xi_n(d)=2^d$ because $d\mid n-1$. Hence, \[\vert\text{Per}_r(T)\vert=\sum_{d\mid r}\mu(r/d)2^d=\kappa_r.\] 

\noindent 
Case 2: In this case, assume $r\nmid n-1$. Note that this implies that $r$ is even. Since $r\mid 2n-2$, we deduce that $(r/2)\mid n-1$. Since $(r/2)\mid r$, this shows that $\gcd(r,n-1)\geq r/2$. Because $r\nmid n-1$, $\gcd(r,n-1)<r$. Therefore, $\gcd(r,n-1)=r/2$. Let $d$ be a positive divisor of $r$. We have $d\mid n-1$ if and only if $d\mid\gcd(r,n-1)$. That is, $d\mid n-1$ if and only if $d\mid(r/2)$. It follows that $\xi_n(d)=\xi_{\frac r2+1}(d)$. This holds for any positive divisor $d$ of $r$, so \[\vert\text{Per}_r(T)\vert=\sum_{d\mid r}\mu(r/d)\xi_n(d)=\sum_{d\mid r}\mu(r/d)\xi_{\frac r2+1}(d)=\lambda_r. \qedhere\]
\end{proof}

Theorem \ref{Thm3} gives an explicit description of the dynamics of $L_n=[C_n,\text{parity}_3,\text{id}]$ since the zero vector is certainly a fixed point of this map. Another natural SDS map worth considering is $[C_n,(1+\text{parity})_3,\text{id}]$, the SDS map obtained from setting all vertex functions equal to $(1+\text{parity})_3$. In the following theorem, we show that this map has a fixed point if and only if $n$ is a multiple of $4$. Hence, by Theorem \ref{Thm3}, we completely understand the dynamics of this map. The proof of the following theorem also shows that it is relatively simple to determine whether or not one of the symmetric invertible SDS maps that we have been considering has a fixed point. 

\begin{theorem}\label{Thm4}
The SDS map $[C_n,(1+\text{\emph{parity}})_3,\text{\emph{id}}]\colon\mathbb F_2^n\to\mathbb F_2^n$ has a fixed point if and only if $4$ divides $n$. 
\end{theorem}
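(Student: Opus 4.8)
The plan is to apply the fixed-point criterion recorded just after Proposition \ref{Prop2}: writing $T(v) = L_n(v) + b$ as in Lemma \ref{Lem1}, the map $T$ has a fixed point if and only if $b$ lies in the subspace $\{L_n(z) + z : z \in \mathbb{F}_2^n\}$, i.e.\ if and only if the linear system $L_n(a) + a = b$ is solvable for $a \in \mathbb{F}_2^n$. The first task is therefore to compute $b = T(0)$ explicitly by running the sequential update of the all-$(1+\text{parity})_3$ system on the zero vector. Updating $v_1$ first gives $b_1 = 1$; for $2 \le i \le n-1$ the neighbor $v_{i-1}$ has already been updated while $v_{i+1}$ still holds a $0$, so the update reads $b_i = 1 + b_{i-1}$, yielding $b_i = 1$ for odd $i$ and $b_i = 0$ for even $i$ in the range $1 \le i \le n-1$. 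The final update of $v_n$ uses the already-updated neighbors $v_{n-1}$ and $v_1$ and simplifies to $b_n = b_{n-1}$, so $b_n = 1$ when $n$ is even and $b_n = 0$ when $n$ is odd.

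Next I would expand the system $L_n(a) + a = b$ coordinate by coordinate using the explicit formula for $L_n$ in Lemma \ref{Lem1}. Adding $a_i$ to the $i$-th coordinate of $L_n(a)$ gives the equations $a_2 + a_n = b_1$ in coordinate $1$, then $a_1 + a_i + a_{i+1} + a_n = b_i$ in coordinates $2 \le i \le n-1$, and $a_2 + a_n = b_n$ in coordinate $n$. The decisive structural observation is that the first and last equations have identical left-hand sides, so the system can only be consistent when $b_1 = b_n$. For odd $n$ we have $b_1 = 1 \ne 0 = b_n$, so the system is inconsistent and $T$ has no fixed point; this settles all odd $n$ at once.

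For even $n$ we have $b_1 = b_n = 1$, and I would solve the interior equations directly. Setting $c = a_1 + a_n$, coordinate $i$ becomes the recursion $a_{i+1} = a_i + b_i + c$ for $2 \le i \le n-1$, whose solution is $a_n = a_2 + \sum_{j=2}^{n-1} b_j + (n-2)c$. Substituting this into $a_2 + a_n = b_1$ cancels $a_2$ and collapses the whole system to the single scalar condition $\sum_{j=2}^{n-1} b_j + (n-2)c = b_1$; the parameter $a_2$ is then free, and $a_1$ may be chosen as $c + a_n$ to realize any value of $c$, so no further constraints arise. Since $n$ is even, $(n-2)c = 0$, and the condition becomes $\sum_{j=1}^{n-1} b_j \equiv 0 \pmod 2$. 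Reading off $b$ from the first paragraph, this sum counts the odd indices in $\{1, \dots, n-1\}$, which equals $n/2$; hence consistency is equivalent to $n/2 \equiv 0 \pmod 2$, that is, to $4 \mid n$.

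I expect the main obstacle to be the careful bookkeeping in the final paragraph rather than any deep idea: one must correctly identify that $c = a_1 + a_n$ enters the consistency equation with coefficient $n-2$, verify that this coefficient is even exactly when $n$ is even so that the free parameter $c$ drops out, and confirm that the choice $a_1 = c + a_n$ genuinely realizes $c$ without reintroducing a constraint. It is precisely this vanishing of the $c$-term for even $n$ that makes consistency independent of the free parameters and forces the clean congruence $4 \mid n$.
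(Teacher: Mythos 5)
Your proposal is correct and follows essentially the same route as the paper: both compute the translation vector $b=T(0)$ explicitly (it is $1$ in the odd coordinates up to $n-1$, $0$ in the even ones, with $b_n=b_{n-1}$) and then analyze the coordinatewise equations of $L_n(a)+a=b$, extracting ``$n$ even'' from the clash between the first and last coordinates and ``$4\mid n$'' from summing the interior equations. The only cosmetic difference is that the paper exhibits an explicit fixed point when $4\mid n$, whereas you establish existence by showing the linear system is consistent and parametrizing its solutions.
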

\begin{proof}
Let $T_n=[C_n,(1+\text{parity})_3,\text{id}]$. Let \[\gamma_i=\begin{cases} 0, & \mbox{if } i\equiv 0\pmod 2; \\ 1, & \mbox{if } i\equiv 1\pmod 2. \end{cases}\] Put $u_n=(\gamma_1,\gamma_2,\ldots,\gamma_{n-1},\gamma_{n-1})$. For example, $u_8=(1,0,1,0,1,0,1,1)$. Using Lemma \ref{Lem1}, it is straightforward to check that $T_n(v)=L_n(v)+u_n$ for each $v\in\mathbb F_2^n$. 

Let $a_n$ be the $n$-tuple of $0$'s and $1$'s whose $i^\text{th}$ entry is $0$ if and only if $i\equiv 1,2\pmod 4$. For example, $a_8=(0,0,1,1,0,0,1,1)$. We leave the reader to verify that $a_n$ is a fixed point of $T_n$ if $4\mid n$. 

To prove the converse, suppose $T_n$ has a fixed point $c=(c_1,\ldots,c_n)$. We have $c=T_n(c)=L_n(c)+u_n$, so $u_n=L_n(c)+c$. For any $z\in\mathbb F_2^n$, let $[z]_j$ denote the $j^\text{th}$ entry of $z$. If $i\in\{1,\ldots,n-2\}$, then $[L_n(c)]_i=c_n+c_1+c_{i+1}$ by Lemma \ref{Lem1}. Therefore, $[L_n(c)+c]_i=c_n+c_1+c_i+c_{i+1}$. On the other hand, $[u_n]_i=\gamma_i$ by definition. Thus, if $1\leq i\leq n-2$, then
\begin{equation}\label{Eq5} 
c_n+c_1+c_i+c_{i+1}=\gamma_i.
\end{equation} Setting $i=1$ in \eqref{Eq5} yields $c_2+c_n=1$. Furthermore, \[\gamma_{n-1}=[u_n]_n=[L_n(c)+c]_n=c_2+c_n\] by Lemma \ref{Lem1}. It follows that $\gamma_{n-1}=1$, so $n$ is even. This implies that \[\sum_{i=1}^{n-2}(c_n+c_1+c_i+c_{i+1})=c_1+c_{n-1}=[L_n(c)+c]_{n-1}=[u_n]_{n-1}=\gamma_{n-1}=1.\] Invoking \eqref{Eq5}, we obtain \[1=\sum_{i=1}^{n-2}(c_n+c_1+c_i+c_{i+1})=\sum_{i=1}^{n-2}\gamma_i.\] This implies that $4\mid n$. 
\end{proof}

As in the proof of the previous theorem, let $T_n=[C_n,(1+\text{parity})_3,\text{id}]$. From Theorems \ref{Thm3} and \ref{Thm4}, we deduce that for any fixed positive integer $r$, there are only two possible values of $\vert\text{Per}_r(T_n)\vert$. More precisely, let \[\theta_r=\begin{cases} \sum\limits_{d\mid_or}\mu(r/d)2^{\frac d2+1}, & \mbox{if } r\equiv 0\pmod 2; \\ \sum\limits_{d\mid r}\mu(r/d)2^{d+1}, & \mbox{if } r\equiv 1\pmod 2. \end{cases}\] For any positive integer $n$, we have $\vert\text{Per}_r(T_n)\vert\in\{0,\theta_r\}$. 

Indeed, suppose $\vert\text{Per}_r(T_n)\vert\neq 0$. If $r$ is odd, then $r\nmid_o 2n-2$, so it follows from Theorem \ref{Thm3} that $T_n$ has a fixed point. By Theorem \ref{Thm4}, $n$ is a multiple of $4$. Using Theorem \ref{Thm3} once again, we find that $\vert\text{Per}_r(T_n)\vert=\sum\limits_{d\mid r}\mu(r/d)2^{d+1}=\theta_r$. Next, assume $r$ is even. If $4\mid n$, then Theorems \ref{Thm3} and \ref{Thm4} imply that $r\mid n-1$. This is impossible, so $4\nmid n$. Theorem \ref{Thm4} now tells us that $T_n$ does not have a fixed point, so $\vert\text{Per}_r(T_n)\vert=\sum\limits_{d\mid_or}\mu(r/d)2^{\frac d2+1}=\theta_r$ by Theorem \ref{Thm3}.    

\section{Suggestions for Future Research} 
Because SDS are defined using so many parts, there are several ways to vary the problems considered here. It is natural to consider symmetric invertible SDS defined using a different set of states (such as $\mathbb F_3$) or a different family of base graphs (such as wheel graphs or more general circulant graphs). In fact, it would be interesting to develop some method for gaining information about the invariant factor decompositions of SDS maps of the form $[G,\text{parity}_3,\pi]$, where $G$ is an element of some reasonably large family of base graphs and the implicit set of states is still $\mathbb F_2$. For example, finding the minimal polynomials of such maps would yield useful divisibility relationships governing the sizes of orbits of the maps. In doing so, it might be useful to consider the article \cite{Chen}, which proves results that allow one to compute the matrix of a linear SDS map when an explicit formula such as that derived in Lemma \ref{Lem1} is not readily available.

It is also worth mentioning the possibility of obtaining results similar to Corollary \ref{Cor2} and the final two paragraphs of Section 4. Consider the following definition. 
\begin{definition} \label{Def10} 
Let $I$ be an indexing set, and let $V=\{F_i\}_{i\in I}$ be a collection of self-maps of a set $X$. Let \[\mathcal R_r(V)=\{\vert\text{Per}_r(F_i)\vert\colon i\in I\}.\]
We say the collection $V$ is \emph{$r$-orbit-restricted} if $\mathcal R_r(V)$ is finite, and we say $V$ is \emph{orbit-restricted} if $V$ is $r$-orbit-restricted for all positive integers $r$. 
\end{definition}
Of course, any finite collection of functions from a set $X$ to itself is orbit-restricted. Let $\mathcal V$ denote the set of all SDS maps of SIAECA with the identity update order. According to Corollary \ref{Cor2}, $\mathcal R_r(\mathcal V)\subseteq\{0,\kappa_r,\lambda_r,\rho_r\}$ for each $r$. The two paragraphs at the end of Section 4 show that $\mathcal R_r(\{[C_n,(1+\text{parity})_3,\text{id}]\}_{n\geq 3})=\{0,\theta_r\}$. Hence, these collections of SDS maps are also orbit restricted. 

What other families of SDS maps are orbit-restricted? It seems quite difficult to answer this question in general, but we will at least make some conjectures. Fix an integer $m\geq 2$, and define $f_m\colon(\mathbb Z/m\mathbb Z)^3\rightarrow \mathbb Z/m\mathbb Z$ by $f_m(x_1,x_2,x_3)=x_1+x_2+x_3$. Furthermore, define $(1+f)_m\colon(\mathbb Z/m\mathbb Z)^3\rightarrow \mathbb Z/m\mathbb Z$ by $(1+f)_m(x_1,x_2,x_3)=1+x_1+x_2+x_3$. For the moment, we will define all SDS using the set of states $\mathbb Z/m\mathbb Z$ (instead of $\mathbb F_2$). Let $F_{m,n}$ and $G_{m,n}$ denote the SDS maps $[C_n,f_m,\text{id}]$ and $[C_n,(1+f)_m,\text{id}]$, respectively. 
\begin{conjecture} \label{Conj1} 
The collections $\{F_{m,n}\}_{n\geq 3}$ and $\{G_{m,n}\}_{n\geq 3}$ are each orbit-restricted. 
\end{conjecture}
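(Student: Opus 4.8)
The plan is to prove the stronger quantitative statement that, for each fixed $m\geq 2$ and each $r$, the number $\vert\text{Per}_1(F_{m,n}^r)\vert$ is bounded above by a constant depending only on $m$ and $r$, uniformly in $n$. Since $\text{Per}_r(F_{m,n})\subseteq\text{Per}_1(F_{m,n}^r)$, such a bound forces $\mathcal R_r(\{F_{m,n}\}_{n\geq 3})$ to be a finite subset of $\{0,1,\ldots,m^{2r}\}$, and the same for $G_{m,n}$; this is exactly orbit-restrictedness. (One could instead route through the M\"obius inversion \eqref{Eq3}, but the crude containment already suffices.) The first observation is that the local update at $v_i$ is the $\mathbb Z/m\mathbb Z$-linear transvection $x_i\mapsto x_{i-1}+x_i+x_{i+1}$; composing these in the identity order shows that $F_{m,n}$ is the reduction modulo $m$ of a single integer matrix $M_n$ (a product of $n$ transvections, so $\det M_n=1$), and crucially this integer matrix does not depend on $m$. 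The map $G_{m,n}$ is the affine transformation $v\mapsto M_nv+c_n$ for a fixed vector $c_n$, just as in the proof of Theorem \ref{Thm4}.

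The key step is a uniform bound on geometric multiplicities. Writing $b=M_na$, the sequential sweep over $v_1,v_2,\ldots,v_n$ gives $b_i=b_{i-1}+a_i+a_{i+1}$ for every middle index $2\leq i\leq n-1$ (when $v_i$ updates, its left neighbor already carries its new value $b_{i-1}$, while its right neighbor still carries the old value $a_{i+1}$). Hence any eigenvector of $M_n$ over an algebraically closed field, say $M_na=\lambda a$, satisfies the two-term recurrence
\[
a_{i+1}=(\lambda-1)a_i-\lambda a_{i-1}\qquad(2\leq i\leq n-1),
\]
so that $a_3,\ldots,a_n$ are determined by the pair $(a_1,a_2)$. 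Thus the $\lambda$-eigenspace injects into the plane of pairs $(a_1,a_2)$, and \emph{every} eigenvalue of $M_n$, over \emph{every} $\overline{\mathbb F}_p$, has geometric multiplicity at most $2$. I expect the careful bookkeeping of this recurrence — tracking which neighbors have already been updated, and checking that the two boundary indices $i=1,n$ are irrelevant to the multiplicity bound — to be the main technical point, though it is a short computation.

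From here the counting is routine. Fix a prime $p$ and let $\alpha_1(x)\mid\cdots\mid\alpha_t(x)$ be the invariant factors of $M_n\bmod p$ over $\mathbb F_p$. Because the divisibility chain forces the number of $\alpha_i$ divisible by any fixed irreducible $q(x)$ to equal the geometric multiplicity of a root of $q$, the previous step gives $\#\{i:q\mid\alpha_i\}\leq 2$ for every irreducible $q$. Proposition \ref{Prop1} then yields $\dim_{\mathbb F_p}\text{Per}_1((M_n\bmod p)^r)=\sum_{i=1}^t\deg\gcd(x^r-1,\alpha_i(x))$; distributing each gcd over the irreducible factors $q$ of $x^r-1$ and using that at most two of the $\alpha_i$ are divisible by any given $q$, one sees that each $q$ contributes at most $2\deg(q)$ times the multiplicity of $q$ in $x^r-1$, so the whole sum is at most $2\deg(x^r-1)=2r$. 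This bound is independent of $n$ and of $p$, and it handles uniformly both the separable case $p\nmid r$ and the non-separable case $p\mid r$, since it never factors $x^r-1$ into distinct roots.

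Finally I would assemble the pieces arithmetically. For a prime power $p^a\,\|\,m$, comparing Smith normal forms of $M_n^r-I$ over $\mathbb Z$ shows $\vert\text{Per}_1((M_n\bmod p^a)^r)\vert\leq\vert\text{Per}_1((M_n\bmod p)^r)\vert^{a}\leq p^{2ra}$, and the Chinese Remainder Theorem multiplies these over $p\mid m$ to give $\vert\text{Per}_1(F_{m,n}^r)\vert\leq m^{2r}$ for all $n$. For $G_{m,n}$, its $r$-th iterate is affine with linear part $M_n^r$, so its fixed-point set is either empty or a coset of $\text{Per}_1(F_{m,n}^r)$; hence $\vert\text{Per}_1(G_{m,n}^r)\vert\leq\vert\text{Per}_1(F_{m,n}^r)\vert\leq m^{2r}$. (Alternatively one may linearize $G_{m,n}$ through the construction of Definition \ref{Def3} and observe that the eigenvector recurrence for the enlarged matrix $\widetilde M_n$ again bounds geometric multiplicities, now by $3$.) In either family the attainable values of $\vert\text{Per}_r\vert$ lie in a finite set, so both $\{F_{m,n}\}_{n\geq3}$ and $\{G_{m,n}\}_{n\geq3}$ are orbit-restricted.
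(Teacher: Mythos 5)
This statement is posed in the paper as an open conjecture; the author offers no proof, remarking only that the prime case ``could'' be tractable by linear algebra and that the composite case is ``a bit more radical.'' Your proposal, as far as I can check it, is a complete and correct proof of the conjecture, and it goes well beyond what the paper does for $m=2$ (where the exact invariant factors of $L_n$ are computed via the shift-map conjugacy of Fact \ref{Thm2}). The engine of your argument is the uniform geometric-multiplicity bound, and it is sound: the relation $b_i=b_{i-1}+a_i+a_{i+1}$ for $2\leq i\leq n-1$ is exactly what the sequential sweep gives (it reproduces the formula of Lemma \ref{Lem1} when reduced mod $2$), and it holds over $\mathbb Z$, hence over every $\overline{\mathbb F}_p$; the resulting recurrence $a_{i+1}=(\lambda-1)a_i-\lambda a_{i-1}$ injects every eigenspace into the $(a_1,a_2)$-plane, so every eigenvalue of $M_n$ over every field has geometric multiplicity at most $2$, independently of $n$. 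The remaining steps are standard and correctly executed: the number of invariant factors of $M_n\bmod p$ divisible by a fixed irreducible $q$ equals the geometric multiplicity of a root of $q$ over $\overline{\mathbb F}_p$, so Proposition \ref{Prop1} gives $\dim\mathrm{Per}_1((M_n\bmod p)^r)\leq 2\deg(x^r-1)=2r$ without any separability hypothesis; the Smith-normal-form inequality $\vert\ker(A\bmod p^a)\vert=p^{\sum_i\min(v_p(d_i),a)}\leq\vert\ker(A\bmod p)\vert^a$ and the Chinese Remainder Theorem then yield $\vert\mathrm{Per}_1(F_{m,n}^r)\vert\leq m^{2r}$ for all $n$; and the coset argument handles $G_{m,n}$ with no extra work. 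Since $\mathrm{Per}_r(F)\subseteq\mathrm{Per}_1(F^r)$, both families have $\mathcal R_r$ contained in $\{0,1,\ldots,m^{2r}\}$, which is finiteness as required. The only hedged sentence in your write-up (about ``careful bookkeeping'' of the boundary indices $i=1,n$) is already resolved: those two relations are simply discarded, which can only enlarge the space being bounded, so they cannot hurt the upper bound. What your approach buys, compared with the paper's exact computations at $m=2$, is precisely the uniformity in $n$, $p$, and the multiplicative structure of $m$ needed to settle the conjecture; it would be worth writing this up carefully, and noting that the same multiplicity bound gives quantitative progress toward Conjecture \ref{Conj2} as well.
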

It is reasonable to expect Conjecture \ref{Conj1} to hold when $m$ is a prime (so $\mathbb Z/m\mathbb Z$ is a field) since, in that case, linear algebra similar to that exploited above in the case $m=2$ could force a certain rigidity upon the dynamics of the maps $F_{m,n}$ and $G_{m,n}$. The conjecture is a bit more radical in the case in which $m$ is not a prime.  

\begin{conjecture} \label{Conj2}
Let $m,n,r$ be positive integers with $m\geq 2$ and $n\geq 3$. Every prime divisor of $\vert\text{\emph{Per}}_1(F_{m,n}^r)\vert$ divides $m$. Every prime divisor of $\vert\text{\emph{Per}}_1(G_{m,n}^r)\vert$ divides $m$.
\end{conjecture}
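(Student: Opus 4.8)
The plan is to recognize that this divisibility claim, unlike the exact enumeration pursued for $m=2$ in the body of the paper, needs no information about invariant factors or about the (for composite $m$, genuinely delicate) structure of $(\mathbb Z/m\mathbb Z)^n$ as a module over $(\mathbb Z/m\mathbb Z)[x]$; it follows at once from Lagrange's theorem. First I would note that $F_{m,n}=[C_n,f_m,\text{id}]$ is built, just as in Lemma \ref{Lem1}, by composing local update functions, the $i$th of which replaces the $i$th coordinate of a state vector by $x_{i-1}+x_i+x_{i+1}$ (indices modulo $n$) and leaves the others fixed. Since $f_m$ is $\mathbb Z/m\mathbb Z$-linear, each such local update is a $\mathbb Z/m\mathbb Z$-linear map, and hence so is their composite $F_{m,n}$; consequently $F_{m,n}^r$ is an endomorphism of the additive group $(\mathbb Z/m\mathbb Z)^n$ for every $r$.

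Next I would observe that $\text{Per}_1(F_{m,n}^r)=\{v\in(\mathbb Z/m\mathbb Z)^n : F_{m,n}^r(v)=v\}$ is precisely the kernel of the group homomorphism $v\mapsto F_{m,n}^r(v)-v$. A kernel is a subgroup of the ambient additive group $(\mathbb Z/m\mathbb Z)^n$, which has order $m^n$, so by Lagrange's theorem $\vert\text{Per}_1(F_{m,n}^r)\vert$ divides $m^n$. Every prime divisor of a divisor of $m^n$ divides $m$, which establishes the first assertion of the conjecture.

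For $G_{m,n}=[C_n,(1+f)_m,\text{id}]$ I would reduce to the linear case via the affine/coset reasoning of Section 3. Writing $G_{m,n}(v)=L(v)+b$ with $L=F_{m,n}$ and $b\in(\mathbb Z/m\mathbb Z)^n$, one computes $G_{m,n}^r(v)=L^r(v)+w$, where $w=\sum_{i=0}^{r-1}L^i(b)$. Thus the fixed points of $G_{m,n}^r$ are exactly the solutions $v$ of $L^r(v)-v=-w$, that is, the preimage of $-w$ under the homomorphism $v\mapsto L^r(v)-v$. Such a preimage is either empty or a coset of the kernel of this homomorphism, and that kernel is exactly $\text{Per}_1(F_{m,n}^r)$. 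Since a coset has the same cardinality as the subgroup it translates, $\vert\text{Per}_1(G_{m,n}^r)\vert$ equals either $0$ or $\vert\text{Per}_1(F_{m,n}^r)\vert$; in the latter case the previous paragraph again forces every prime divisor to divide $m$.

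The ``hard part'' here is essentially that there is no hard part: the prime-support statement is free from Lagrange's theorem, whereas determining the exact value of $\vert\text{Per}_1(F_{m,n}^r)\vert$ for composite $m$ would require analyzing $\ker(v\mapsto L^r(v)-v)$ over the non-principal ring $\mathbb Z/m\mathbb Z$, which is precisely the obstruction lurking behind Conjecture \ref{Conj1}. The only point demanding care is the degenerate possibility that $G_{m,n}^r$ has no fixed point, giving $\vert\text{Per}_1(G_{m,n}^r)\vert=0$; the conjecture should be read as asserting the divisibility whenever periodic points exist (equivalently, whenever the count is positive), and under that reading the coset argument completes the proof.
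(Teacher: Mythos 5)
This statement is posed in the paper as a conjecture and is given no proof there --- the author attributes it to numerical experiments --- so there is nothing internal to compare your argument against, and the relevant question is simply whether your argument is sound. It is, and as far as I can tell it settles the conjecture as literally stated. Each local update function of $[C_n,f_m,\text{id}]$ replaces one coordinate by a $\mathbb Z/m\mathbb Z$-linear combination of coordinates and fixes the rest, so it is an additive endomorphism of $(\mathbb Z/m\mathbb Z)^n$; hence so are $F_{m,n}$ and $F_{m,n}^r$, and $\text{Per}_1(F_{m,n}^r)$ is exactly the kernel of the homomorphism $v\mapsto F_{m,n}^r(v)-v$. By Lagrange its order divides $m^n$, and a prime dividing a divisor of $m^n$ divides $m$. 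The affine reduction for $G_{m,n}$ is also correct: $G_{m,n}^r(v)=F_{m,n}^r(v)+w$, so the fixed-point set is a fiber of $v\mapsto F_{m,n}^r(v)-v$, hence empty or a coset of the kernel, giving $\vert\text{Per}_1(G_{m,n}^r)\vert\in\{0,\vert\text{Per}_1(F_{m,n}^r)\vert\}$. You are right to flag the case of a zero count, which genuinely occurs (compare the zero values in Lemma \ref{Lem4} for $m=2$); under the convention that every prime divides $0$ the conjecture must be read as applying only to positive counts, and that is clearly the intended reading. What your approach makes plain is that this conjecture, unlike Conjecture \ref{Conj1}, concerns only the prime support of the order of a subgroup or coset of $(\mathbb Z/m\mathbb Z)^n$ and so needs none of the invariant-factor machinery of Sections 3 and 4; the hard content of the paper lies in the exact counts, not in this divisibility statement. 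One cosmetic correction to your closing aside: $\mathbb Z/m\mathbb Z$ is a principal ideal ring (though not a domain); the real obstruction to exact enumeration for composite $m$ is the failure of the invariant factor decomposition for modules over $(\mathbb Z/m\mathbb Z)[x]$, but this does not affect your proof.
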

A small amount of numerical evidence\footnote{When $m=2$, Conjecture \ref{Conj3} follows from what we have already proven. We have also checked that Conjecture \ref{Conj3} holds for $n\leq 9$ when $m=4$ and for $n\leq 6$ when $m=8$.} hints at the following conjecture. 
\begin{conjecture} \label{Conj3}
Suppose $m$ is a power of $2$. If $F_{m,n}$ has a periodic orbit of period $r$, then $r\mid m(n-1)$. If $G_{m,n}$ has a periodic orbit of period $r$, then $r\mid m(n-1)$.
\end{conjecture}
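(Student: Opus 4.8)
The plan is to reduce the entire statement to a single assertion about the multiplicative order of a linear map over $\mathbb{Z}/m\mathbb{Z}$, and then to transfer the known $\mathbb{F}_2$ bound of Lemma \ref{Lem2} up the tower $\mathbb{Z}/2\mathbb{Z},\mathbb{Z}/4\mathbb{Z},\ldots,\mathbb{Z}/2^k\mathbb{Z}$ by a $2$-adic order-lifting argument. Write $m=2^k$. Over $\mathbb{Z}/m\mathbb{Z}$ the map $F_{m,n}$ is linear; let $M_n$ be its matrix in the standard basis. The map $G_{m,n}$ is affine with the \emph{same} linear part, say $G_{m,n}(v)=M_nv+b$. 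Applying the linearization of Definition \ref{Def3} (whose construction is valid over any commutative ring), I form the $(n+1)\times(n+1)$ matrix $\widetilde M$ built from $M_n$ and $b$, together with the associated linear map $\widetilde G$ on $(\mathbb{Z}/m\mathbb{Z})^{n+1}$. As in the discussion following Definition \ref{Def3}, the slice $z_0=1$ is $\widetilde G$-invariant and $\widetilde G$ restricted to it is conjugate to $G_{m,n}$. Hence it suffices to prove $M_n^{m(n-1)}=I$ and $\widetilde M^{m(n-1)}=I$ over $\mathbb{Z}/m\mathbb{Z}$: these give $F_{m,n}^{m(n-1)}=\text{id}$ and $G_{m,n}^{m(n-1)}=\text{id}$, forcing every period to divide $m(n-1)$.

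Next I would exploit that, because $m$ is a power of $2$, the canonical surjection $\mathbb{Z}/m\mathbb{Z}\to\mathbb{F}_2$ is a ring homomorphism. It carries $f_m$ and $(1+f)_m$ to $\text{parity}_3$ and $(1+\text{parity})_3$, so it intertwines the $\mathbb{Z}/m\mathbb{Z}$-dynamics with the corresponding $\mathbb{F}_2$-dynamics; in matrix terms $M_n\equiv B_n\pmod 2$ and $\widetilde M\equiv\widetilde B_n\pmod 2$, where $B_n$ and $\widetilde B_n$ are the $\mathbb{F}_2$ matrices of $L_n$ and of the linearization of $[C_n,(1+\text{parity})_3,\text{id}]$. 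Notably this needs no explicit formula for $M_n$. By Lemma \ref{Lem2}, $m_{L_n}(x)\mid x^{2n-2}-1$, so $B_n^{2n-2}=I$ over $\mathbb{F}_2$; the paragraph preceding Lemma \ref{Lem4} shows $m_{\widetilde T}(x)\mid x^{2n-2}-1$ for every admissible $T$, so likewise $\widetilde B_n^{2n-2}=I$. Since reduction mod $2$ commutes with matrix powers, I conclude $M_n^{2n-2}\equiv I\pmod 2$ and $\widetilde M^{2n-2}\equiv I\pmod 2$; equivalently, each of $M_n^{2n-2}$ and $\widetilde M^{2n-2}$ has the form $I+2A$ for some matrix $A$ over $\mathbb{Z}/m\mathbb{Z}$.

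The heart of the argument is then the elementary $2$-adic lemma: if $P=I+2A$ over $\mathbb{Z}/2^k\mathbb{Z}$, then $P^{2^{k-1}}=I$. I would prove it by lifting $A$ to an integer matrix and showing by induction on $j$ that $P^{2^j}=I+2^{j+1}C_j$ for integer matrices $C_j$, the inductive step being $(I+2^{j+1}C_j)^2=I+2^{j+2}\bigl(C_j+2^{j}C_j^2\bigr)$; taking $j=k-1$ gives $P^{2^{k-1}}\equiv I\pmod{2^k}$. Applying this with $P=M_n^{2n-2}$ and with $P=\widetilde M^{2n-2}$ yields $\bigl(M_n^{2n-2}\bigr)^{2^{k-1}}=I$ and $\bigl(\widetilde M^{2n-2}\bigr)^{2^{k-1}}=I$ over $\mathbb{Z}/m\mathbb{Z}$. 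As $(2n-2)\,2^{k-1}=2^k(n-1)=m(n-1)$, this is precisely $M_n^{m(n-1)}=I$ and $\widetilde M^{m(n-1)}=I$, completing the reduction of the first paragraph and hence proving the bound for both $F_{m,n}$ and $G_{m,n}$.

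I expect the main obstacle to be conceptual rather than computational: pinning down exactly why a power of $2$ is forced. The argument hinges on two features special to $m=2^k$—that the reduction $\mathbb{Z}/m\mathbb{Z}\to\mathbb{F}_2$ is a ring map at all (so the $\mathbb{F}_2$ order $2n-2$ is even available), and that squaring strictly increases the $2$-adic valuation of the correction term $2A$, which is what converts that order $2n-2$ into the $\mathbb{Z}/2^k\mathbb{Z}$ order $m(n-1)=(2n-2)\,2^{k-1}$. For a modulus with an odd prime factor neither mechanism applies, consistent with the conjecture restricting to powers of $2$. A secondary point needing care is the $G_{m,n}$ case: one must check that the linearization $\widetilde M$ and its mod-$2$ reduction behave functorially, so that $\widetilde M\equiv\widetilde B_n\pmod 2$ and the bound on $m_{\widetilde B_n}$ transfers; this is routine, since Definition \ref{Def3} and equation \eqref{Eq4} make no use of the characteristic of the field.
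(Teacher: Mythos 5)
The statement you were asked to prove is not proved anywhere in the paper: it is left as an open conjecture (Conjecture \ref{Conj3}), supported only by the already-established case $m=2$ and by finite computations (the footnote records checks for $m=4$, $n\leq 9$ and $m=8$, $n\leq 6$). So there is no ``paper proof'' to compare against, and the relevant question is simply whether your argument is sound --- and as far as I can check, it is, and it settles the conjecture. All three ingredients hold up. First, over $\mathbb{Z}/m\mathbb{Z}$ each local update for $f_m$ is linear and each local update for $(1+f)_m$ is affine with the same linear part, so $F_{m,n}(v)=M_nv$ and $G_{m,n}(v)=M_nv+b$, and the linearization of Definition \ref{Def3} works verbatim over any commutative ring (the slice $z_0=1$ is invariant and conjugate to $G_{m,n}$); hence it does suffice to show $M_n^{m(n-1)}=I$ and $\widetilde M^{m(n-1)}=I$. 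Second, reduction modulo $2$ is a ring homomorphism intertwining the $\mathbb{Z}/2^k\mathbb{Z}$-dynamics with the $\mathbb{F}_2$-dynamics, and the paper supplies exactly the needed base case: $m_{L_n}(x)\mid x^{2n-2}-1$ by Lemma \ref{Lem2}, and $m_{\widetilde T}(x)\mid x^{2n-2}-1$ by the paragraph preceding Lemma \ref{Lem4}, which rests on Proposition \ref{Prop2}. You were right to route through the minimal polynomial rather than Cayley--Hamilton: for even $n$ the characteristic polynomial $(x+1)^2(x^{n-1}-1)$ of $\widetilde T$ does \emph{not} divide $x^{2n-2}-1=(x^{n-1}-1)^2$, so the Milligan--Wilson input is genuinely needed there. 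Third, the lifting computation $(I+2^{j+1}C_j)^2=I+2^{j+2}(C_j+2^jC_j^2)$ is correct and gives $P^{2^{k-1}}=I$ over $\mathbb{Z}/2^k\mathbb{Z}$ whenever $P\equiv I\pmod 2$; since $(2n-2)2^{k-1}=m(n-1)$, you obtain the stronger uniform statement $F_{m,n}^{m(n-1)}=\mathrm{id}$ and $G_{m,n}^{m(n-1)}=\mathrm{id}$, of which the conjectured divisibility of every period is an immediate consequence. One small correction to your closing discussion: the valuation-lifting mechanism is not actually special to $2$ --- for an odd prime $p$ one has $(I+p^{j+1}C)^p\equiv I\pmod{p^{j+2}}$ by the same binomial estimate, so an analogous order bound over $\mathbb{Z}/p^k\mathbb{Z}$ would follow from knowledge of the order over $\mathbb{F}_p$. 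What genuinely forces $m$ to be a power of $2$ is the base case: the order bound $2n-2$ comes from the paper's analysis of $L_n$ over $\mathbb{F}_2$ (Lemma \ref{Lem2} and its consequences), which has no counterpart in the paper for odd characteristic. This misattribution does not affect the validity of the proof.
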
 
 
\section{Acknowledgments} 
The author would like to express his deepest gratitude to Taom Sakal, who wrote a computer program that was used to find the nonzero values of $\vert\text{Per}_r(T)\vert$ when $n\leq 17$ and $T=[C_n,\text{parity}_3,\text{id}]$ or $T=[C_n,(1+\text{parity})_3,\text{id}]$. This data was paramount to the production of conjectures that the author eventually developed into Theorem \ref{Thm3}. Mr. Sakal also used the program to produce data that was used to make Conjectures \ref{Conj2} and \ref{Conj3}.

The author thanks Dr. Padraic Bartlett for introducing him to sequential dynamical systems and giving excellent suggestions for the improvement of this article. 

The author wishes to thank the anonymous \emph{Discrete Mathematics} reviewers for very helpful suggestions. He would especially like to thank the reviewer who pointed out several relevant articles in the literature concerning connections with generalized toggle groups and techniques that helped streamline many of the proofs in this paper. 
 

\begin{thebibliography}{9} 

\bibitem{Barrett1}
C. L. Barrett and C. M. Reidys. Elements of a theory of computer simulation I: Sequential CA over random graphs. \emph{Appl. Math. Comput.}, 98 (1999), 241--259.

\bibitem{Barrett4}
C. L. Barrett, H. B. Hunt III, M. V. Marathe, S. S. Ravi, D. J. Rosenkrantz, and R. E. Stearns. Reachability problems for sequential dynamical systems with threshold functions. \emph{Theoret. Comput. Sci.}, 295 (2003), 41--64. 

\bibitem{Barrett5}
C. L. Barrett, H. B. Hunt III, M. V. Marathe, S. S. Ravi, D. J. Rosenkrantz, R. E. Stearns, and P. T. Tosic. Gardens of Eden and fixed points in sequential dynamical systems. DM--CCG, (2001), 95--110. 

\bibitem{Barrett6}
C. L. Barrett, H. B. Hunt III, M. V. Marathe, S. S. Ravi, D. J. Rosenkrantz, and R. E. Stearns. On some special classes of sequential dynamical systems. \emph{Ann. Comb.}, 7 (2003), 381--408. 

\bibitem{Barrett2}
C. L. Barrett, H. S. Mortveit, and C. M. Reidys. Elements of a theory of computer simulation II: sequential dynamical systems. \emph{Appl. Math. Comput.}, 107 (2000), 121--136. 

\bibitem{Barrett3}
C. L. Barrett, H. S. Mortveit, and C. M. Reidys. Elements of a theory of computer simulation III: equivalence of SDS. \emph{Appl. Math. Comput.}, 122 (2001), 325--340. 

\bibitem{Cameron}
P. Cameron and D. Fon-der-Flaass. Orbits of antichains revisited. \emph{European J. Combin.}, 16 (1995), 545--554. 

\bibitem{Chan}
M. Chan, S. Haddadan, S. Hopkins, and L. Moci. The expected jaggedness of order ideals. \emph{Forum of Mathematics, Sigma}, 5 (2017).

\bibitem{Chen}
R. Chen and C. Reidys. Linear sequential dynamical systems and the Moebius functions of partially ordered sets. \emph{Linear Algebra Appl.}, 553 (2018), 270--291.

\bibitem{Collina}
E. Collina and A. D'Andrea. A graph-dynamical interpretation of Kiselman's semigroups. \emph{J. Algebraic Combin.}, 41 (2015), 1115--1132.

\bibitem{Many}
D. Einstein, M. Farber, E. Gunawan, M. Joseph, M. Macauley, J. Propp, and S. Rubinstein-Salzedo.
Noncrossing partitions, toggles, and homomesies. \emph{Electron. J.
Combin.}, 23 (2016).

\bibitem{Einstein}
D. Einstein and J. Propp. Combinatorial, piecewise-linear, and birational homomesy for products of two chains. arXiv:1310.5294 (2013). 

\bibitem{Einstein2}
D. Einstein and J. Propp. Piecewise-linear and birational toggling. \emph{DMTCS proc. FPSAC}, AT (2014), 513--524.

\bibitem{Elspas}
B Elspas. The theory of autonomous linear sequential networks. \emph{IRE Transactions on Circuit Theory}, 6 (1959), 45--60.  

\bibitem{Garcia1}
L. D. Garcia, A. S. Jarrah, and R. Laubenbacher. Sequential dynamical systems over words. \emph{Appl. Math. Comput.}, 174 (2006), 500--510.

\bibitem{Grinberg}
D. Grinberg and T. Roby. Iterative properties of birational rowmotion I: generalities and skeletal posets. \emph{Electron. J.
Combin.}, 23 (2016).

\bibitem{Grinberg2}
D. Grinberg and T. Roby. Iterative properties of birational rowmotion II: rectangles and triangles. \emph{Electron. J. Combin.},
22 (2015).

\bibitem{Macauley}
M. Macauley, J. McCammond, and H. S. Mortveit. Dynamics groups of asynchronous cellular automata. \emph{J. Algebraic Combin.}, 33 (2011), 11--35. 

\bibitem{Macauley2}
M. Macauley and H. S. Mortveit. Cycle Equivalence of Graph Dynamical Systems. \emph{Nonlinearity}, 22 (2009), 421-436. 

\bibitem{Milligan}
D. K. Milligan and M. J. D. Wilson. The behavior of affine Boolean sequential networks. \emph{Connection Science}, 5 (1993), 153--167. 

\bibitem{Mortveit}
H. S. Mortveit and C. M. Reidys. \emph{An introduction to sequential dynamical systems.} Springer Science \& Business Media, 2007.

\bibitem{Mortveit2}
H. S. Mortveit and C. M. Reidys. Discrete, sequential dynamical systems. \emph{Discrete Math.}, 226 (2001), 281--295. 

\bibitem{Reidys1}
C. M. Reidys. On acyclic orientations and sequential dynamical systems. \emph{Adv. Appl. Math.}, 27 (2001), 790--804.

\bibitem{Roby}
T. Roby. Dynamical algebraic combinatorics and the homomesy
phenomenon. \emph{Recent Trends in Combinatorics.} Springer, 2016. 619--652.

\bibitem{Stanley}
R. Stanley. Promotion and evacuation. \emph{Electron. J.
Combin.}, 16 (2009).

\bibitem{Striker}
J. Striker. Rowmotion and generalized toggle groups. arXiv:1601.03710 (2016). 

\bibitem{Striker2}
J. Striker and N. Williams. Promotion and rowmotion. \emph{European J. Combin.}, 33 (2012), 1919--1942. 

\bibitem{Toledo}
R. A. H. Toledo. Linear finite dynamical systems. \emph{Comm.
Algebra}, 33 (2005), 2977--2989.

\end{thebibliography}
\end{document}